\undefined \DeclareGraphicsRule{*}{eps}{*}{} \else
\newtheorem{theorem}{Theorem}[section]
\newtheorem{lemma}[theorem]{Lemma}
\newtheorem{cor}[theorem]{Corollary}
\newtheorem{prop}[theorem]{Proposition}
\newtheorem{conj}[theorem]{Conjecture}
\newtheorem{definition}[theorem]{Definition}
\begin{document}

\title{Erd\H{o}s-Ginzburg-Ziv theorem for finite commutative semigroups}
\author{
Sukumar Das Adhikari$^a$\thanks{Email :
adhikari@hri.res.in} \ \ \ \ \ \ Weidong Gao$^b$\thanks{Email :
wdgao1963@aliyun.com} \ \ \ \ \ \ Guoqing Wang$^c$\thanks{Corresponding author's email: gqwang1979@aliyun.com}\\
{\small $^a$Harish-Chandra Research Institute,  Chhatnag Road, Jhusi, Allahabad 211 019, India}\\
{\small $^b$Center for Combinatorics, LPMC-TJKLC, Nankai University, Tianjin 300071, P. R. China}\\
{\small $^c$Department of Mathematics, Tianjin Polytechnic University, Tianjin, 300387, P. R. China}\\
}
\date{}
\maketitle

\begin{abstract}
Let $\mathcal{S}$ be a finite additively written
 commutative semigroup, and let $\exp(\mathcal{S})$ be its exponent which is
defined as the least common multiple of all periods of the elements
in $\mathcal{S}$. For every sequence $T$ of elements in
$\mathcal{S}$ (repetition allowed), let $\sigma(T) \in \mathcal{S}$
denote the sum of all terms of $T$. Define the Davenport constant
$\mathsf D(\mathcal{S})$ of $\mathcal{S}$ to be the least positive integer
$d$ such that every sequence $T$ over $\mathcal{S}$ of length at
least $d$ contains a proper subsequence $T'$ with
$\sigma(T')=\sigma(T)$, and define $\mathsf E(\mathcal{S})$ to be the least
positive integer $\ell$ such that every sequence $T$ over
$\mathcal{S}$ of length at least $\ell$ contains a subsequence $T'$
with
$|T|-|T'|=\left\lceil\frac{|\mathcal{S}|}{\exp(\mathcal{S})}\right\rceil\exp(\mathcal{S})$
and $\sigma(T')=\sigma(T)$.   When $\mathcal{S}$ is a finite abelian
group, it is well known that
$\left\lceil\frac{|\mathcal{S}|}{\exp(\mathcal{S})}\right\rceil\exp(\mathcal{S})=|\mathcal{S}|$
and $\mathsf E(\mathcal{S})=\mathsf D(\mathcal{S})+|\mathcal{S}|-1$. In this paper
we investigate whether $\mathsf E(\mathcal{S})\leq
\mathsf D(\mathcal{S})+\left\lceil\frac{|\mathcal{S}|}{\exp(\mathcal{S})}\right\rceil
\exp(\mathcal{S})-1$ holds true for all finite commutative
semigroups $\mathcal{S}$. We provide a positive answer to the
question above for some classes of finite commutative semigroups,
including group-free semigroups, elementary semigroups, and
archimedean semigroups with certain constraints.
\\
\noindent{\small {\bf Key Words}: {\sl Erd\H{o}s-Ginzburg-Ziv Theorem; Zero-sum; Finite commutative semigroups;
Elementary semigroups; Archimedean semigroups}}
\end{abstract}

\section {Introduction}

Zero-Sum Theory is a rapidly growing subfield of Combinatorial and
Additive Number Theory. The main objects of study are sequences of
terms from an abelian group (see \cite{GaoGeroldingersurvey} for a
survey, or \cite{Grynkiewicz-book} for a recent monograph). Pushed
forward by a variety of applications the last years have seen (at
least) three substantially new directions:

 (a) The study of weighted
zero-sum problems in abelian groups.

(b) The study of zero-sum
problems in (not necessarily cancellative) commutative semigroups.

(c) The study of zero-sum problems (product-one problems) in
non-commutative groups (see \cite{Bass2007, GL2010, GGr,
Grynkiewicz2013}).

In this paper we shall focus on direction (b).
 Let $G$ be an additive finite abelian group.
A sequence $T$ of elements in $G$ is called a {\sl zero-sum
sequence} if the sum of all terms of $T$ equals to zero, the
identify element of $G$. Investigations on zero-sum problems were
initiated by pioneering research on two themes, one of which is the
following result obtained in 1961 by P. Erd\H{o}s, A. Ginzburg and
A. Ziv.

\noindent\textbf{Throrem A.}  \cite{EGZ} \ (Erd\H{o}s-Ginzburg-Ziv)
\ {\sl Every sequence of $2n-1$ elements in an additive finite
abelian group of order $n$ contains a zero-sum subsequence of length
$n$. }

Another starting point is the study on Davenport constant $\mathsf D(G)$
(named after H. Davenport) of a finite abelian group $G$, which is
defined as the smallest integer $d$ such that, every sequence $T$ of
$d$ elements in $G$ contains a nonempty zero-sum subsequence. Though
attributed to H. Davenport who proposed  the study of this constant
in 1965, K. Rogers \cite{rog1} had first studied it in 1962 and this
reference was somehow missed out by most of the authors in this
area.

Let $G$ be a finite abelian group. For every integer $\ell$ with $\exp(G)\mid \ell$, let $\mathsf s_{\ell}(G)$ denote the least integer $d$ such that every
 sequence $T$ over
  $G$ of length $|T|\geq d$ contains a zero-sum subsequence of length $\ell$. For $\ell =\exp(G)$, we abbreviate $\mathsf s(G)=\mathsf s_{\exp(G)}(G)$ which is
  called   EGZ constant,  and for $\ell =|G|$ we abbreviate $\mathsf E(G)=\mathsf s_{|G|}(G)$, which is sometimes called Gao-constant (see \cite[page 193]{Grynkiewicz2013}). In 1996, the second author of this paper established a connection between
Erd\H{o}s-Ginzburg-Ziv Theorem and Davenport constant.

\noindent\textbf{Throrem B.}  (Gao, \cite{Gao1996,Gao2003}) \ {\sl
If $G$ is a finite abelian group, then $\mathsf s_{\ell}(G)
=\mathsf D(G)+\ell-1$ holds for all positive integer $\ell$ providing that
$\ell\geq |G|$ and $\exp(G)\mid \ell$. }

From Theorem B we know that $$\mathsf E(G)=\mathsf D(G)+|G|-1. \ \ \ \ \ (*)$$  This
formula has stimulated a lot of further research (see
\cite{AdhikariChen08,AdhikariChenFrKoPa06,AdhikariRath06,GH,
Grynkiewicz12, {Grynkiewicz-book},Luca07,YuanZeng10}
 for example). Among others, the full Chapter
16 in the recent monograph \cite{Grynkiewicz-book} is devoted to
this result. Indeed, the final Corollary in this chapter, (Corollary
16.1, page 260), provides a far-reaching generalization of the
initial formula (*), which is called the  $\Psi$-weighted Gao
Theorem. The formula $\mathsf E(G)=\mathsf D(G)+|G|-1$ has also been
 generalized to some  finite groups (not necessarily
 commutative, for example see \cite{Bass2007} and \cite{GL2010}).

In this paper, we aim to
 generalize $\mathsf E(G)=\mathsf D(G)+|G|-1$ to some  abstract finite commutative semigroups. To proceed,
we shall need some preliminaries. The notations on zero-sum theory
used in this paper are consistent with \cite{GaoGeroldingersurvey}
and notations on commutative semigroups are consistent with
\cite{Grillet monograph}. For sake of completeness, we introduce
some necessary ones.

\noindent $\bullet$  Throughout this paper, we shall always denote by $\mathcal{S}$ a finite commutative
semigroup and by $G$ a finite commutative group. We abbreviate ``finite commutative semigroup" into ``{\bf f.c.s.}".

Let ${\cal F}(\mathcal{S})$ be the (multiplicatively written) free
commutative monoid with basis $\mathcal{S}$. Then any $A\in {\cal
F}(\mathcal{S})$, say $A=a_1a_2\cdot\ldots\cdot a_n$, is a sequence
of elements in the semigroup $\mathcal{S}$. The identify element of
${\cal F}(\mathcal{S})$ is denoted by $1\in {\cal F}(\mathcal{S})$
(traditionally, the identity element is also called the empty
sequence). For any subset $S_0\subset \mathcal{S}$, let $A(S_0)$
denote the subsequence of $A$ consisting of all the terms from
$S_0$.

The operation of the semigroup $\mathcal{S}$ is denoted by ``$+$".
The identity element of $\mathcal{S}$, denoted $0_{\mathcal{S}}$ (if
exists), is the unique element $e$ of $\mathcal{S}$ such that
$e+a=a$ for every $a\in \mathcal{S}$. The zero element of
$\mathcal{S}$, denoted $\infty_{\mathcal{S}}$ (if exists), is the
unique element $z$ of $\mathcal{S}$ such that $z+a=z$ for every
$a\in \mathcal{S}$. Let
$$\sigma(A)=a_1+a_2+\cdots+a_n$$ be the sum of all terms in the
sequence $A$.  If $S$ has an identity element $0_{\mathcal{S}}$,  we
allow $A=1$, the empty sequence  and adopt the convention that
$\sigma(1)=0_\mathcal{S}$. Denote
$$\begin{array}{llll}\mathcal{S}^{0}=\left\{\begin{array}{llll}
               \mathcal{S},  & \mbox{if \ \ } \mathcal{S} \mbox{ has an identity element};\\
               \mathcal{S}\cup \{0\},  & \mbox{if \ \ } \mathcal{S} \mbox{ does not have an identity element.}\\
              \end{array}
              \right.
\end{array}$$
For any element $a\in \mathcal{S}$, the {\bf period} of $a$ is the
least positive integer $t$ such that $ra=(r+t)a$ for some integer
$r>0$. We define $\exp(\mathcal{S})$ to be the period of
$\mathcal{S}$, which is the least common multiple of all the periods
of the elements of $\mathcal{S}$. Let $A,B\in
\mathcal{F}(\mathcal{S})$ be sequences on $\mathcal{S}$. We call $B$
a {\sl proper subsequence} of $A$ if $B \mid A$ and   $B\neq A$. We
say that $A$ is {\it reducible} if $\sigma(B)=\sigma(A)$ for some
proper subsequence $B$ of $A$ (note that, $B$ is probably the empty
sequence $1$ if $\mathcal{S}$ has the identity element
$0_{\mathcal{S}}$ and $\sigma(A)=0_{\mathcal{S}})$. Otherwise, we
call $S$ {\it irreducible}.

\begin{definition}  \ Let $\mathcal{S}$ be an additively written commutative semigroup.
\begin{enumerate}
\item Let $\mathsf d(\mathcal{S})$ denote the smallest $\ell \in
\mathbb{N}_0\cup \{\infty\}$ with the property:

For any $m\in \mathbb{N}$ and $c_1, \ldots,c_m\in \mathcal{S}$ there
exists a subset $J \subset [1,m]$ such that $|J|\leq \ell$ and
$$
\sum_{j=1}^m c_j=\sum_{j \in J}c_j.
$$

\item Let $\mathsf D(\mathcal{S})$ denote the smallest $\ell \in
\mathbb{N}\cup \{\infty\}$ such that every sequence $A\in
\mathcal{F}(\mathcal{S})$ of length $|A|\geq \ell$ is reducible.

\item We call $\mathsf d(\mathcal{S})$ the small Davenport constant of $\mathcal{S}$,
and $\mathsf D(\mathcal{S})$ the (large) Davenport constant of
$\mathcal{S}$.
\end{enumerate}
\end{definition}

The small Davenport constant was introduced in \cite[Definition
2.8.12]{GH}, and  the large Davenport constant was first studied in
\cite{gaowang}. For convenience of the reader, we state the
following (well known) conclusion.

\begin{prop} \label{small-large} \ Let $\mathcal{S}$ be a finite  commutative semigroup. Then,
\begin{enumerate}
\item  $\mathsf d(\mathcal{S})<
\infty.$
\item  $\mathsf D(\mathcal{S})=\mathsf d(\mathcal{S})+1.$
\end{enumerate}
\end{prop}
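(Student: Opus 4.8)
The plan is to establish the two assertions in order: first deduce $\mathsf d(\mathcal S)<\infty$ from the eventual periodicity of the monogenic subsemigroups of $\mathcal S$, and then identify $\mathsf D(\mathcal S)$ with $\mathsf d(\mathcal S)+1$ by an easy upper bound together with a minimal‑witness construction of an irreducible sequence of the extremal length.

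For part (1), I would fix an arbitrary list $c_1,\ldots,c_m\in\mathcal S$ and shrink the multiplicity of each value. For $a\in\mathcal S$ the monogenic subsemigroup $\langle a\rangle=\{a,2a,3a,\ldots\}\subseteq\mathcal S$ is finite, hence eventually periodic: there are integers $r_a\ge 1$ and $t_a\ge 1$ such that $ka=k'a$ whenever $k,k'\ge r_a$ and $k\equiv k'\pmod{t_a}$, and one may take $r_a+t_a-1=|\langle a\rangle|$. Writing $n_a$ for the number of indices $j\in[1,m]$ with $c_j=a$, replace $n_a$ by $n_a'$, where $n_a'=n_a$ if $n_a\le r_a+t_a-1$ and otherwise $n_a'$ is the unique integer in $[r_a,r_a+t_a-1]$ congruent to $n_a$ modulo $t_a$; note that $n_a>r_a+t_a-1$ forces $n_a\ge r_a$, so in either case $n_a a=n_a' a$. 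Keeping, for every $a$, exactly $n_a'$ of the indices with value $a$ produces a subset $J\subseteq[1,m]$ with $\sum_{j\in J}c_j=\sum_{j=1}^m c_j$ and $|J|=\sum_a n_a'\le\sum_{a\in\mathcal S}|\langle a\rangle|$. Thus the defining property of $\mathsf d(\mathcal S)$ holds with $\ell=\sum_{a\in\mathcal S}|\langle a\rangle|$, whence $\mathsf d(\mathcal S)\le\sum_{a\in\mathcal S}|\langle a\rangle|<\infty$.

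For part (2), the bound $\mathsf D(\mathcal S)\le\mathsf d(\mathcal S)+1$ is immediate: if $|A|\ge\mathsf d(\mathcal S)+1$, applying the definition of $\mathsf d(\mathcal S)$ to the terms of $A$ yields a subset of at most $\mathsf d(\mathcal S)<|A|$ of its indices whose terms sum to $\sigma(A)$, i.e.\ a proper subsequence of $A$ with sum $\sigma(A)$, so $A$ is reducible. For the reverse inequality I would construct an irreducible sequence of length exactly $d:=\mathsf d(\mathcal S)$. The case $d=0$ forces $\mathcal S=\{0_{\mathcal S}\}$ and $\mathsf D(\mathcal S)=1$, so assume $d\ge1$. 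Since $d$ is the least $\ell$ with the stated property, the value $d-1$ fails: there is a list $c_1,\ldots,c_m\in\mathcal S$ for which no $J$ with $|J|\le d-1$ satisfies $\sum_{j\in J}c_j=\sum_{j=1}^m c_j$. Among the subsets $J\subseteq[1,m]$ with $\sum_{j\in J}c_j=\sum_{j=1}^m c_j$ — a nonempty family, since $J=[1,m]$ qualifies — pick one, $J_0$, of minimum size; the failure of $d-1$ gives $|J_0|\ge d$, while the definition of $\mathsf d(\mathcal S)=d$ applied to this same list gives $|J_0|\le d$, so $|J_0|=d$. The subsequence $B=\prod_{j\in J_0}c_j$ has length $d$ and is irreducible: a proper subsequence of $B$ with sum $\sigma(B)=\sum_{j\in J_0}c_j$ would correspond to a proper subset $J'\subsetneq J_0$ with $\sum_{j\in J'}c_j=\sum_{j=1}^m c_j$ and $|J'|<d$, contradicting the minimality of $J_0$. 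Since $\mathsf D(\mathcal S)$ is the least $\ell$ such that every sequence of length $\ge\ell$ is reducible, the existence of an irreducible $B$ of length $d$ forces $\mathsf D(\mathcal S)\ge d+1$; combined with the upper bound this gives $\mathsf D(\mathcal S)=\mathsf d(\mathcal S)+1$.

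I do not expect a serious obstacle. The one point that needs care is the bookkeeping in part (2): one must choose the extremal list witnessing the failure of $d-1$ and then extract from \emph{that} list a sum‑preserving subset of size $d$ that is also minimal, since it is precisely the minimality (not just the short length) that makes the resulting subsequence irreducible. A minor technical check in part (1) is that the index $r_a$ of $\langle a\rangle$ may exceed its period $t_a$, so one must confirm $n_a>r_a+t_a-1$ already implies $n_a\ge r_a$ before reducing modulo $t_a$.
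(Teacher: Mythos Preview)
Your argument is correct. The paper handles part~(1) by citation to \cite[Proposition~2.8.13]{GH} rather than by proof, so your explicit bound $\mathsf d(\mathcal S)\le\sum_{a\in\mathcal S}|\langle a\rangle|$ via reduction of multiplicities in each monogenic subsemigroup is a self-contained addition rather than a deviation in method. For part~(2) the upper bound is identical to the paper's, and the lower bound uses the very same idea---a minimum-length sum-preserving subsequence is irreducible---only you phrase it as constructing an irreducible sequence of length $\mathsf d(\mathcal S)$ (so $\mathsf D(\mathcal S)\ge\mathsf d(\mathcal S)+1$), whereas the paper phrases it dually: for \emph{every} $T$ the minimal $T'$ with $\sigma(T')=\sigma(T)$ is irreducible, hence has length at most $\mathsf D(\mathcal S)-1$, giving $\mathsf d(\mathcal S)\le\mathsf D(\mathcal S)-1$. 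The paper's formulation is slightly more economical since it avoids singling out a witness to the failure of $d-1$, but the content is the same.
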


\begin{proof} 1. See \cite[Proposition 2.8.13]{GH}.

2.  Take an arbitrary sequence $A\in \mathcal{F}(\mathcal{S})$ of
length at least $\mathsf d(\mathcal{S})+1.$ By the definition of
$\mathsf d(\mathcal{S})$, there exists a subsequence $A'$ of $A$
such that $|A'|\leq \mathsf d(\mathcal{S})<|A|$ and
$\sigma(A')=\sigma(A)$. This proves
$$
\mathsf D(\mathcal{S})\leq \mathsf d(\mathcal{S})+1.
$$
Now it remains to  prove $\mathsf d(\mathcal{S})\leq \mathsf
D(\mathcal{S})-1.$
 Take an arbitrary sequence $T\in \mathcal{F}(\mathcal{S})$. Let $T'$ be the minimal (in length) subsequence of $T$ such that
$\sigma(T')=\sigma(T)$. It follows that $T'$ is irreducible and
$|T'|\leq \mathsf D(\mathcal{S})-1$. By the arbitrariness of $T$, we
have
$$\mathsf d(\mathcal{S})\leq \mathsf D(\mathcal{S})-1.$$
\end{proof}

\begin{definition}
Define  $\mathsf E(\mathcal{S})$ of any f.c.s. $\mathcal{S}$ as the smallest
positive integer $\ell$ such that, every sequence $A\in
\mathcal{F}(\mathcal{S})$ of length $\ell$ contains a subsequence
$B$ with $\sigma(B)=\sigma(A)$ and $|A|-|B|=\kappa(\mathcal{S})$, where
\begin{equation}\label{kap}
\kappa(\mathcal{S})=\left\lceil\frac{|\mathcal{S}|}{\exp(\mathcal{S})}\right\rceil\exp(\mathcal{S}).
\end{equation}
\end{definition}

Note that if $\mathcal{S}=G$ is a finite abelian group, the
invariants $\mathsf E(\mathcal{S})$ and $\mathsf D(\mathcal{S})$
  are consistent  with the
classical  invariants $\mathsf D(G)$ and $\mathsf E(G)$,
respectively.  We suggest  the following generalization of $\mathsf
E(G)=\mathsf D(G)+|G|-1.$

\medskip

\begin{conj}\label{conjecture E(G)=D(G)+kappa(G)-1}
For any f.c.s. $\mathcal{S}$,
$$\mathsf E(\mathcal{S})\leq \mathsf D(\mathcal{S})+ \kappa(\mathcal{S})-1.$$
\end{conj}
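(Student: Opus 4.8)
\medskip
\noindent\textbf{Proof proposal.} Since only the upper bound is asserted, the plan is to fix an arbitrary $A\in\mathcal{F}(\mathcal{S})$ of length $|A|=\mathsf D(\mathcal{S})+\kappa(\mathcal{S})-1$ and produce a subsequence $B\mid A$ with $\sigma(B)=\sigma(A)$ and $|A|-|B|=\kappa(\mathcal{S})$; writing $A=BC$ this amounts to finding a factorization with $|C|=\kappa(\mathcal{S})$ and $\sigma(B)+\sigma(C)=\sigma(B)$. One should keep in mind from the start the feature absent in the group case: here $\sigma(B)=\sigma(A)$ does \emph{not} mean that the discarded block $C$ is zero-sum, only that $\sigma(C)$ is absorbed by the \emph{particular} element $\sigma(B)$, so there is no cancellation available.

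\emph{Step 1 (shrink to a core).} Let $B_0\mid A$ be of minimal length with $\sigma(B_0)=\sigma(A)$; by Proposition~\ref{small-large}, $|B_0|\le\mathsf d(\mathcal{S})=\mathsf D(\mathcal{S})-1$, and $B_0$ is irreducible. Its complementary subsequence $C_0:=AB_0^{-1}$ satisfies $|C_0|=|A|-|B_0|\ge\kappa(\mathcal{S})\ge1$, and from $\sigma(A)=\sigma(B_0)+\sigma(C_0)=\sigma(A)+\sigma(C_0)$ we obtain the key relation
$$\sigma(A)+\sigma(C_0)=\sigma(A);$$
that is, $\sigma(C_0)$ lies in the subsemigroup $\mathrm{St}:=\{x\in\mathcal{S}:\sigma(A)+x=\sigma(A)\}$. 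If $|C_0|=\kappa(\mathcal{S})$ we are done with $B=B_0$. Otherwise put $r:=|C_0|-\kappa(\mathcal{S})\ge1$; it now suffices to find $C''\mid C_0$ with $|C''|=r$ and $\sigma(C'')\in\mathrm{St}$, for then $B:=B_0C''$ has $|A|-|B|=\kappa(\mathcal{S})$ and $\sigma(B)=\sigma(B_0)+\sigma(C'')=\sigma(A)+\sigma(C'')=\sigma(A)$.

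\emph{Step 2 (trim off a $\kappa(\mathcal{S})$-block).} Passing to complements inside $C_0$, the task becomes: delete from $C_0$ a block $D$ of length exactly $\kappa(\mathcal{S})$ so that the surviving sum stays in $\mathrm{St}$ --- for instance with $\sigma(C_0D^{-1})+\sigma(D)=\sigma(C_0D^{-1})$, which keeps the surviving sum equal to $\sigma(C_0)\in\mathrm{St}$. Two ingredients feed this. First, a cyclic fact: for $k$ a multiple of $\exp(\mathcal{S})$ with $k\ge|\mathcal{S}|$ and any $a\in\mathcal{S}$, the element $ka$ is idempotent (then $ka$ lies in the periodic part of $\langle a\rangle$, whose unique idempotent occurs at an index divisible by the period of $a$, which divides $\exp(\mathcal{S})$ hence $k$); since $\exp(\mathcal{S})\mid\kappa(\mathcal{S})$ and $\kappa(\mathcal{S})\ge|\mathcal{S}|$, the sum $\kappa(\mathcal{S})a$ is idempotent for \emph{every} $a\in\mathcal{S}$. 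Second, a pigeonhole/structure step --- using the decomposition of $\mathcal{S}$ into archimedean components --- that locates inside $C_0$ a length-$\kappa(\mathcal{S})$ block $D$ whose (idempotent) sum is negligible for the surviving sum in the sense above.

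\emph{The main obstacle} is the second ingredient of Step~2. Its group-case analogue is the Erd\H{o}s--Ginzburg--Ziv theorem together with the tower $\mathsf s_{k\exp(G)}(G)=\mathsf s_{(k-1)\exp(G)}(G)+\exp(G)$, which peels off zero-sum blocks of length $\exp(G)$ one at a time; there ``negligible'' means ``zero-sum'', an \emph{absolute} condition, thanks to cancellation. In a general $\mathcal{S}$ there is no cancellation: whether deleting a block changes a sum depends on the base point $\sigma(A)$ and on the mutual position, in the semilattice of archimedean components of $\mathcal{S}$, of the components involved, and this has to be controlled \emph{simultaneously} with the exact count $\kappa(\mathcal{S})=\lceil|\mathcal{S}|/\exp(\mathcal{S})\rceil\exp(\mathcal{S})$. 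That is exactly where the argument cannot be pushed through in general, which is why the conjecture is established below only when the archimedean structure is trivial or rigid enough to identify the relevant idempotents: group-free semigroups (where every period is $1$, so $\exp(\mathcal{S})=1$ and $\kappa(\mathcal{S})=|\mathcal{S}|$), elementary semigroups, and archimedean semigroups (which contain at most one idempotent) under additional constraints.
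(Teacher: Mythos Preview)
The statement is presented in the paper as a \emph{conjecture}; there is no proof of it in full generality, only verifications for the three special classes you list at the end. Your write-up is accordingly not a proof either, and you say so: Step~2 is left as an unresolved ``pigeonhole/structure step'', and you correctly locate the obstruction in the loss of cancellation.

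Your Step~1 is sound and is essentially how the paper handles the group-free case (Theorem~\ref{Theorem group-free hold}): shrink to a short $B_0$ with $\sigma(B_0)=\sigma(A)$ and then re-enlarge. There your Step~2 is vacuous, since for \emph{any} $C''\mid C_0$ one has $\sigma(A)\leqq_{\mathcal H}\sigma(B_0C'')\leqq_{\mathcal H}\sigma(B_0)=\sigma(A)$, and Green's congruence is equality (Lemma~\ref{Lemma Green equality}). For the elementary and archimedean cases, however, the paper does \emph{not} proceed via your shrink-then-regrow template or by hunting for an idempotent-sum block inside $C_0$. Instead it works directly with the structural decomposition at hand --- group part versus nilpotent ideal, or kernel versus Rees quotient --- first proving a lower bound on $\mathsf D(\mathcal S)$ in terms of the pieces, and then applying Theorem~B to the \emph{group} piece (or a quotient of it) to extract a length-$\kappa(\mathcal S)$ block whose sum lies in the relevant stabilizer. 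So your set $\mathrm{St}$ and the idempotency of $\kappa(\mathcal S)a$ are correct and suggestive observations, but the paper's special-case arguments are more bespoke and do not fit the uniform Step~1/Step~2 scheme you propose.
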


We remark that $\mathsf E(\mathcal{S})\geq \mathsf D(\mathcal{S})+
\kappa(\mathcal{S})-1$, the converse of the above inequality, holds
trivially when $\mathcal{S}$ contains an identity element
$0_{\mathcal{S}}$, i.e., when $\mathcal{S}$ is a finite commutative
monoid. The extremal sequence can be obtained by any irreducible
sequence $T$ of length $\mathsf D(\mathcal{S})-1$ adjoined with a sequence
of length $\kappa(\mathcal{S})-1$ of all terms equaling
$0_{\mathcal{S}}$. So, Conjecture \ref{conjecture
E(G)=D(G)+kappa(G)-1}, if true, would imply the following.

\begin{conj}\label{conjecture E(G)=D(G)+kappa(G)-1(2)}
For any finite commutative monoid $\mathcal{S}$,
$$\mathsf E(\mathcal{S})= \mathsf D(\mathcal{S})+ \kappa(\mathcal{S})-1.$$
\end{conj}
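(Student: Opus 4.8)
The statement is an equality, so the plan is to prove the two inequalities separately. The lower bound $\mathsf E(\mathcal S)\ge \mathsf D(\mathcal S)+\kappa(\mathcal S)-1$ is the easy half and goes through unconditionally for monoids; the upper bound $\mathsf E(\mathcal S)\le \mathsf D(\mathcal S)+\kappa(\mathcal S)-1$ carries all the weight, and I would attack it directly rather than quoting Conjecture \ref{conjecture E(G)=D(G)+kappa(G)-1} as a black box.

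For the lower bound I would exhibit an extremal sequence. By the definition of $\mathsf D(\mathcal S)$ there is an irreducible $U\in\mathcal F(\mathcal S)$ with $|U|=\mathsf D(\mathcal S)-1$. Set $A=U\cdot 0_{\mathcal S}^{\,\kappa(\mathcal S)-1}$, so that $|A|=\mathsf D(\mathcal S)+\kappa(\mathcal S)-2$ and, since identity terms contribute nothing, $\sigma(A)=\sigma(U)$. If some $B\mid A$ satisfied $\sigma(B)=\sigma(A)$ and $|A|-|B|=\kappa(\mathcal S)$, then $|B|=\mathsf D(\mathcal S)-2$; writing $B=0_{\mathcal S}^{\,i}\cdot V$ with $V\mid U$ gives $\sigma(V)=\sigma(B)=\sigma(U)$, while $|V|\le\mathsf D(\mathcal S)-2<|U|$ forces $V$ to be a proper subsequence of $U$, contradicting irreducibility. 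Hence $\mathsf E(\mathcal S)>\mathsf D(\mathcal S)+\kappa(\mathcal S)-2$, which is the desired lower bound.

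For the upper bound, fix $A$ with $|A|=\mathsf D(\mathcal S)+\kappa(\mathcal S)-1$; the task is to delete exactly $\kappa(\mathcal S)$ terms without disturbing the sum, i.e.\ to find $C\mid A$ with $|C|=\kappa(\mathcal S)$ and $\sigma(AC^{-1})=\sigma(A)$. Since $\sigma(A)=\sigma(AC^{-1})+\sigma(C)$, the requirement is that $\sigma(C)$ be absorbed by $\sigma(AC^{-1})$, which is weaker than asking $C$ to be zero-sum and is the correct condition in a non-cancellative monoid. I would combine two mechanisms. First, reducibility (Proposition \ref{small-large}): because $|A|\ge\mathsf D(\mathcal S)=\mathsf d(\mathcal S)+1$, the small-Davenport property produces a sum-preserving subsequence of length at most $\mathsf d(\mathcal S)=\mathsf D(\mathcal S)-1$, so the kept length can always be driven down to at most the target $\mathsf D(\mathcal S)-1$. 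Second — and here the shape $\kappa(\mathcal S)=\lceil|\mathcal S|/\exp(\mathcal S)\rceil\exp(\mathcal S)$ is decisive — I would pass through the semilattice-of-archimedean-components structure of $\mathcal S$: a sufficiently long sub-block has partial sums stabilizing at a single idempotent $e$, placing the analysis inside the maximal subgroup $G_e$ at $e$, where Theorem B applies and supplies zero-sum (hence absorbable) blocks of every admissible length that is a multiple of $\exp(\mathcal S)$. Such blocks can be inserted or deleted without changing the total, letting me adjust the kept length in steps of $\exp(\mathcal S)$ once it has been brought near $\mathsf D(\mathcal S)-1$.

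The hard part is the exact count. Reducibility only bounds the kept length from above, while the idempotent/Theorem~B blocks move it in multiples of $\exp(\mathcal S)$, so making these two mechanisms meet precisely at $\mathsf D(\mathcal S)-1$ requires that the supply of absorbable terms (those sitting above idempotents below $0_{\mathcal S}$, together with the group-component blocks) be large enough to fill every residue gap modulo $\exp(\mathcal S)$. I expect this bookkeeping to close cleanly when $\exp(\mathcal S)=1$ (the group-free case, where single terms are the adjustment blocks and the length can be tuned one unit at a time) and when the component structure is rigid, as in the elementary and archimedean cases; the quantitative length-matching in full generality is exactly the point where the argument becomes delicate, and it is this step — not the extraction of absorbable blocks — that I would expect to be the genuine obstacle.
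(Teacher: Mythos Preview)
The statement you are attempting to prove is labelled a \emph{conjecture} in the paper, not a theorem; the paper does not contain a proof of the equality in full generality. What the paper \emph{does} establish is exactly your lower-bound half: the remark preceding Conjecture~\ref{conjecture E(G)=D(G)+kappa(G)-1(2)} observes that for a monoid one obtains $\mathsf E(\mathcal S)\ge \mathsf D(\mathcal S)+\kappa(\mathcal S)-1$ by adjoining $\kappa(\mathcal S)-1$ copies of $0_{\mathcal S}$ to an irreducible sequence of length $\mathsf D(\mathcal S)-1$. Your extremal-sequence argument is precisely this, and it is correct.

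The upper bound, by contrast, is the content of Conjecture~\ref{conjecture E(G)=D(G)+kappa(G)-1} and is \emph{not} proved in the paper except for the special classes treated in Sections~2--4 (group-free, elementary, and certain archimedean semigroups). Your sketch for the upper bound correctly identifies the two available mechanisms---reducibility to shorten, and group-theoretic zero-sum blocks inside archimedean components to adjust in multiples of $\exp(\mathcal S)$---but you yourself flag the genuine obstruction: making these two mechanisms meet exactly at kept-length $\mathsf D(\mathcal S)-1$ requires a residue-matching argument that does not close in general. That gap is not a technicality you have overlooked; it is the reason the statement is posed as a conjecture. In particular, the semilattice decomposition alone does not give enough control over how partial sums migrate between archimedean components, so there is no guarantee that the absorbable blocks supplied by Theorem~B inside a single $G_e$ suffice to hit every needed residue class. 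Your proposal therefore proves the easy inequality and gives a plausible heuristic for the hard one, but it does not prove the conjecture, and neither does the paper.
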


Nevertheless, to make the study more general, the existence of the
identity element is not necessary. We shall verify that Conjecture
\ref{conjecture E(G)=D(G)+kappa(G)-1} holds for some important
f.c.s., including group-free f.c.s, elementary f.c.s, and
archimedean f.c.s with certain constraints.

\section{On group-free semigroups}

We begin this section with some definitions.

On a commutative semigroup $\mathcal{S}$ the Green's preorder, denoted $\leqq_{\mathcal{H}}$, is defined by
$$a \leqq_{\mathcal{H}} b\Leftrightarrow a=b+t$$ for some $t\in \mathcal{S}^{0}$. Green's congruence, denoted
$\mathcal{H}$, is a basic relation introduced by Green for semigroups which is defined by:
$$a \ \mathcal{H} \ b \Leftrightarrow a \ \leqq_{\mathcal{H}} \ b \mbox{ and } b \ \leqq_{\mathcal{H}} \ a.$$
For an element $a$ of $\mathcal{S}$,  let $H_a$ be the congruence class by $\mathcal{H}$ containing $a$.
We call a f.c.s. $\mathcal{S}$ {\bf group-free}, provided that all its
subgroups are trivial, equivalently, $\exp(\mathcal{S})=1$. The group-free f.c.s is fundamental for Semigroup Theory due to the following property.

\noindent \textbf{Property C.} \ (see \cite{Grillet monograph}, Proposition 2.4 of Chapter V) \  {\sl For
any f.c.s. $\mathcal{S}$, the quotient semigroup  $\mathcal{S}\diagup \mathcal{H}$ of $\mathcal{S}$ by
$\mathcal{H}$ is group-free.}

We first show that Conjecture \ref{conjecture E(G)=D(G)+kappa(G)-1} holds true for any group-free f.c.s.,
for which the following lemma will be necessary.

\begin{lemma}\label{Lemma Green equality} (See \cite{Grillet monograph}, Proposition 2.3 of Chapter V) \
For any group-free f.c.s. $\mathcal{S}$, the Green's congruence $\mathcal{H}$ is the equality on $\mathcal{S}$.
\end{lemma}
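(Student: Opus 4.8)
The plan is to prove the statement directly: assume $a\ \mathcal{H}\ b$ and deduce $a=b$. By definition of Green's preorder there are $s,t\in\mathcal{S}^{0}$ with $a=b+s$ and $b=a+t$. If $s=0$ or $t=0$ we get $a=b$ at once, so assume $s,t\in\mathcal{S}$ and set $u=s+t\in\mathcal{S}$. Substituting one relation into the other gives $a=b+s=(a+t)+s=a+u$, and symmetrically $b=b+u$; iterating, $a=a+ku$ and $b=b+ku$ for every $k\geq 1$.

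Next I bring in group-freeness through $\exp(\mathcal{S})=1$: the period of $u$ equals $1$, so there is an integer $n\geq 1$ with $nu=(n+1)u$. Put $e=nu$. Then $ku=e$ for all $k\geq n$, whence $e+e=e$ (so $e$ is idempotent), $u+e=e$, and from the previous paragraph $a+e=a$ and $b+e=b$. Now set $s'=s+e$ and $t'=t+e$. Using only that $e$ is idempotent and $u+e=e$, one verifies the bookkeeping identities $s'+e=s'$, $t'+e=t'$, $ks'=ks+e$ and $kt'=kt+e$ for $k\geq 1$, $s'+t'=u+e=e$, $b+s'=b+s+e=a+e=a$, and $a+t'=b$.

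The heart of the argument is then to exhibit an explicit subgroup of $\mathcal{S}$ and contradict group-freeness. Consider $G'=\{e\}\cup\{ks':k\geq 1\}\cup\{kt':k\geq 1\}$. I claim $G'$ is closed under $+$ and is a group with identity $e$: sums of multiples of $s'$ (resp.\ of $t'$) stay in $G'$ trivially; $e+x=x$ for all $x\in G'$ by the identities above; a mixed sum $ks'+lt'$ reduces, via $s'+t'=e$, to $(k-l)s'$, to $e$, or to $(l-k)t'$ according as $k>l$, $k=l$, or $k<l$; and finally $ks'$ and $kt'$ are mutually inverse since $ks'+kt'=k(s'+t')=ke=e$, while $e$ is its own inverse. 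Since $\mathcal{S}$ is group-free, every subgroup of $\mathcal{S}$ is trivial, so $G'=\{e\}$; in particular $s'=e$, and therefore $a=b+s'=b+e=b$, which is what we wanted.

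\textbf{Main obstacle.} The only genuine content is recognizing that the connecting elements $s,t$ witnessing $a\ \mathcal{H}\ b$ behave like mutually inverse elements of a (possibly trivial) group, after one replaces them by $s'=s+e$, $t'=t+e$ to force a true identity $e$; once that is seen, the clash with group-freeness is immediate. The small technical point is choosing $n$ so that $e=nu$ is an idempotent with $u+e=e$, and this is precisely where the hypothesis $\exp(\mathcal{S})=1$ (equivalently, all periods equal $1$) is used; the remaining verifications of the group axioms for $G'$ are routine.
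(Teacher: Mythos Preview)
The paper does not give its own proof of this lemma; it is quoted from Grillet's monograph (Chapter~V, Proposition~2.3) and used as a black box. So there is nothing in the paper to compare against.

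Your argument is correct. From $a=b+s$ and $b=a+t$ you get $a=a+u$ with $u=s+t$; the hypothesis $\exp(\mathcal{S})=1$ produces an idempotent $e=nu$ with $u+e=e$, $a+e=a$, $b+e=b$; and then the shifted elements $s'=s+e$, $t'=t+e$ satisfy $s'+t'=e$, which is exactly what is needed to make $G'=\{e\}\cup\{ks':k\geq 1\}\cup\{kt':k\geq 1\}$ a subgroup of $\mathcal{S}$ with identity $e$. Group-freeness forces $G'=\{e\}$, hence $s'=e$ and $a=b+s'=b+e=b$. All the closure and inverse checks you list are valid (the key reductions $ks'+e=ks'$ and $ks'+lt'=(k-l)s'$ for $k>l$ go through because $ke=e$ and $s'+t'=e$).

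Two minor cosmetic remarks. First, your case split ``$s=0$ or $t=0$, else $s,t\in\mathcal{S}$'' is not quite a dichotomy when $\mathcal{S}$ already has an identity $0_{\mathcal{S}}\in\mathcal{S}$, but this is harmless: in that situation $s=0_{\mathcal{S}}$ still gives $a=b$ immediately, and otherwise the main argument applies verbatim. Second, the construction of $G'$ could be compressed by observing that $s'$ lies in the maximal subgroup of $\mathcal{S}$ at the idempotent $e$ (it has $e$ as a two-sided identity and $t'$ as an inverse), so group-freeness kills it directly; but your explicit description is perfectly fine and arguably clearer.
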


Now we are ready to give the following.

\begin{theorem}\label{Theorem group-free hold} For any group-free f.c.s. $\mathcal{S}$,  $$\mathsf E(\mathcal{S})\leq
\mathsf D(\mathcal{S})+ \kappa(\mathcal{S})-1.$$
\end{theorem}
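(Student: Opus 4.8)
The plan is to reduce the whole statement to the single fact, supplied by Lemma~\ref{Lemma Green equality}, that on a group-free f.c.s.\ the Green preorder $\leqq_{\mathcal{H}}$ is antisymmetric (indeed $\mathcal{H}$ is equality). First note that group-freeness means $\exp(\mathcal{S})=1$, so $\kappa(\mathcal{S})=\lceil|\mathcal{S}|/1\rceil\cdot 1=|\mathcal{S}|$; thus it suffices to show that every $A\in\mathcal{F}(\mathcal{S})$ with $|A|\ge\mathsf D(\mathcal{S})+\kappa(\mathcal{S})-1$ has a subsequence $B$ with $\sigma(B)=\sigma(A)$ and $|A|-|B|=\kappa(\mathcal{S})$.

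Fix such an $A$ and let $B_0\mid A$ be a subsequence of minimal length with $\sigma(B_0)=\sigma(A)$ (the choice $B_0=A$ qualifies). By the definition of $\mathsf d(\mathcal{S})$ together with Proposition~\ref{small-large}, $|B_0|\le\mathsf d(\mathcal{S})=\mathsf D(\mathcal{S})-1$, so the complementary subsequence $C_0:=AB_0^{-1}$ has length $|C_0|=|A|-|B_0|\ge\kappa(\mathcal{S})$. Choose any $D\mid C_0$ with $|D|=\kappa(\mathcal{S})$, put $C_0':=C_0D^{-1}$, and set $B:=AD^{-1}=B_0C_0'$. Then $|A|-|B|=|D|=\kappa(\mathcal{S})$ by construction, and it remains only to verify $\sigma(B)=\sigma(A)$.

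For that I would argue entirely with $\leqq_{\mathcal{H}}$. On the one hand $\sigma(B)=\sigma(B_0)+\sigma(C_0')$, whence $\sigma(B)\leqq_{\mathcal{H}}\sigma(B_0)=\sigma(A)$ (if $C_0'$ is empty this still holds, reading $\sigma(C_0')$ as $0\in\mathcal{S}^{0}$). On the other hand $\sigma(A)=\sigma(B)+\sigma(D)$ with $D$ nonempty, so $\sigma(A)\leqq_{\mathcal{H}}\sigma(B)$. Hence $\sigma(B)\ \mathcal{H}\ \sigma(A)$, and Lemma~\ref{Lemma Green equality} forces $\sigma(B)=\sigma(A)$. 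This exhibits the required subsequence, so $\mathsf E(\mathcal{S})\le\mathsf D(\mathcal{S})+\kappa(\mathcal{S})-1$.

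The argument is essentially all there is, so I do not anticipate a serious combinatorial obstacle. The point that most needs care in the write-up is the bookkeeping with the empty sequence: $B_0$ may be the empty sequence (precisely when $\mathcal{S}$ has an identity and $\sigma(A)=0_{\mathcal{S}}$) and $C_0'$ may be empty, and in those degenerate cases one must double-check that the two relations $\sigma(B)\leqq_{\mathcal{H}}\sigma(A)$ and $\sigma(A)\leqq_{\mathcal{H}}\sigma(B)$ remain valid (via $0\in\mathcal{S}^{0}$), since the whole proof rests on applying antisymmetry to them. Conceptually, the mechanism to emphasize is that the Davenport bound leaves at least $\kappa(\mathcal{S})$ spare terms beyond a minimal equal-sum subsequence, and that in a group-free semigroup deleting a block drawn from the complement of such a subsequence cannot change the total sum.
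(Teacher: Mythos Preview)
Your proof is correct and follows essentially the same approach as the paper: find a short subsequence $B_0$ with $\sigma(B_0)=\sigma(A)$ via the Davenport bound, adjust its length so that the complement has exactly $\kappa(\mathcal{S})$ terms, and then use the sandwich $\sigma(A)\leqq_{\mathcal{H}}\sigma(B)\leqq_{\mathcal{H}}\sigma(B_0)=\sigma(A)$ together with Lemma~\ref{Lemma Green equality}. The only cosmetic difference is that the paper enlarges $B_0$ to a $T''$ of length exactly $\mathsf D(\mathcal{S})-1$, whereas you delete a block $D$ of length $\kappa(\mathcal{S})$ from the complement; the resulting subsequence is the same object.
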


\begin{proof} Take an arbitrary sequence $T\in \mathcal{F}(S)$ of length $\mathsf D(\mathcal{S})+
\kappa(\mathcal{S})-1$. By the definition of $\mathsf D(\mathcal{S})$, there exists a subsequence $T'$ of $T$
with $|T'|\leq \mathsf D(\mathcal{S})-1$ and $\sigma(T')=\sigma(T).$ Let $T''$ be a subsequence of $T$ containing
$T'$, i.e., $$T'\mid T'',$$ with length
\begin{equation}\label{equation length T''}
|T''|=\mathsf D(\mathcal{S})-1.
\end{equation}
Note that $T''$ is perhaps equal to $T'$ for example when $|T'|=\mathsf D(\mathcal{S})-1$.
We see that $$\sigma(T')=\sigma(T) \ \leqq_{\mathcal{H}} \ \sigma(T'') \ \leqq_{\mathcal{H}} \
\sigma(T'),$$ and thus  $\sigma(T'') \ \mathcal{H} \ \sigma(T)$. By Lemma \ref{Lemma Green equality},
we have $$\sigma(T'')=\sigma(T).$$ Combined with \eqref{equation length T''}, we have the theorem proved.
\end{proof}

\begin{definition} \ A commutative nilsemigroup $\mathcal{S}$ is a commutative semigroup  with a zero
element $\infty_{\mathcal{S}}$ in which every element $x$ is nilpotent, i.e., $nx=\infty_{\mathcal{S}}$
for some $n>0$.
\end{definition}

Since any finite commutative nilsemigroup is group-free, we have the following
immediate corollary of Theorem \ref{Theorem group-free hold}.

\begin{cor}\label{Corollary nilsemigroup} \  Let $\mathcal{S}$ be a finite commutative nilsemigroup.
Then  $\mathsf E(\mathcal{S})\leq \mathsf D(\mathcal{S})+ \kappa(\mathcal{S})-1$.
\end{cor}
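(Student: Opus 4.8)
The plan is to reduce the statement to Theorem \ref{Theorem group-free hold} by checking that a finite commutative nilsemigroup is group-free. Recall that, by the definition recalled above, $\mathcal{S}$ is group-free precisely when $\exp(\mathcal{S}) = 1$, so it suffices to show that every element of $\mathcal{S}$ has period $1$.

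First I would fix an arbitrary $x \in \mathcal{S}$. Since $\mathcal{S}$ is a nilsemigroup, there is an integer $n > 0$ with $nx = \infty_{\mathcal{S}}$. Because $\infty_{\mathcal{S}}$ is the zero element, $\infty_{\mathcal{S}} + a = \infty_{\mathcal{S}}$ for all $a \in \mathcal{S}$; in particular $(n+1)x = x + nx = x + \infty_{\mathcal{S}} = \infty_{\mathcal{S}} = nx$. Hence $rx = (r+t)x$ holds with $r = n$ and $t = 1$, so the period of $x$ is $1$. As $x$ was arbitrary and $\exp(\mathcal{S})$ is the least common multiple of these periods, we get $\exp(\mathcal{S}) = 1$, i.e. $\mathcal{S}$ is group-free.

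With this in hand, Theorem \ref{Theorem group-free hold} applies verbatim and yields $\mathsf E(\mathcal{S}) \leq \mathsf D(\mathcal{S}) + \kappa(\mathcal{S}) - 1$, completing the argument. I do not anticipate any genuine obstacle: the whole content is the elementary observation that nilpotency of every element forces $\exp(\mathcal{S}) = 1$; one could equivalently argue that any subgroup $H$ of $\mathcal{S}$ has an idempotent identity $e$, and $e = ne = \infty_{\mathcal{S}}$ for a suitable $n$, so $H = \{\infty_{\mathcal{S}}\}$ is trivial. The only point deserving a moment's care is matching the paper's definition of ``period'', but the choice $r = n$, $t = 1$ above does exactly that.
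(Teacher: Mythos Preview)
Your proposal is correct and follows exactly the paper's approach: observe that a finite commutative nilsemigroup is group-free and then invoke Theorem~\ref{Theorem group-free hold}. In fact you supply more detail than the paper, which simply asserts the group-free property and declares the corollary immediate.
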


In the rest of this paper, we need only to consider the case of $\exp(\mathcal{S})>1$. Two classes
 of important f.c.s., finite elementary semigroups and finite archimedean semigroups, will be our
emphasis as both these semigroups are basic components in two kinds of decompositions of semigroups,
namely, subdirect decompositions and semilattice decompositions, correspondingly. Both decompositions have
been the mainstay of Commutative Semigroup Theory for many years (see \cite{Grillet monograph}).

\section{On elementary semigroups}

With respect to the subdirect decompositions, Birkhoff in 1944 proved the following.

\noindent \textbf{Theorem D.} (\cite{Grillet monograph}, Theorem 1.4 of Chapter IV) \ {\sl Every commutative semigroup is a subdirect product of subdirectly irreducible commutative semigroups.}

Hence, we shall give the following result with respect to the subdirect decomposition.

\begin{theorem}\label{Theorem subdirectly irreducible holds} For  any subdirectly
irreducible f.c.s. $\mathcal{S}$,  $$\mathsf E(\mathcal{S})\leq \mathsf D(\mathcal{S})+ \kappa(\mathcal{S})-1.$$
\end{theorem}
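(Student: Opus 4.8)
The plan is to invoke the structure theory of subdirectly irreducible finite commutative semigroups. Recall (see \cite{Grillet monograph}) that a subdirectly irreducible f.c.s.\ $\mathcal{S}$ is either group-free or an \emph{elementary} semigroup, i.e.\ an ideal extension of its kernel $K$ --- the minimal ideal of $\mathcal{S}$, which is a finite abelian group --- by a finite commutative nilsemigroup; thus $\mathcal{S}=K\cup N$ with $N=\mathcal{S}\setminus K$, the Rees quotient $\mathcal{S}/K$ being a finite commutative nilsemigroup with zero $\bar 0$. If $\mathcal{S}$ is group-free then $\exp(\mathcal{S})=1$ and the assertion is exactly Theorem \ref{Theorem group-free hold}; so from now on assume $\mathcal{S}=K\cup N$ is elementary with $K$ nontrivial.

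Before the main argument I would record three facts about elementary semigroups. First, $K$ is a retract of $\mathcal{S}$: the map $\phi\colon\mathcal{S}\to K$, $\phi(s)=0_K+s$ (with $0_K$ the identity of $K$), is a homomorphism, since $0_K$ is idempotent and $K$ is an ideal, and $\phi$ is the identity on $K$. Using $\phi$ one checks $\exp(\mathcal{S})=\exp(K)$, so $\kappa(\mathcal{S})$ is a positive multiple of $\exp(K)$ with $\kappa(\mathcal{S})\ge|\mathcal{S}|\ge|K|$. Second, $\mathsf D(K)\le\mathsf D(\mathcal{S})$, because every irreducible sequence over the subgroup $K$ remains irreducible over $\mathcal{S}$ (the sums of its nonempty subsequences all lie in $K$ and are computed there). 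Third, since $K$ is an ideal, a partial sum that has entered $K$ stays in $K$ under further additions; as $\mathcal{S}/K$ is a finite nilsemigroup, it follows that there is a smallest integer $\eta$ such that $\sigma(A)\in K$ whenever $A\in\mathcal{F}(\mathcal{S})$ has length $\ge\eta$ or contains a term of $K$, and I claim $\mathsf D(\mathcal{S})-1\ge\eta$.

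Granting these, the core runs as follows. Let $T\in\mathcal{F}(\mathcal{S})$ have length $\mathsf D(\mathcal{S})+\kappa(\mathcal{S})-1$. Since $|T|\ge\eta$ we get $\sigma(T)\in K$, hence $\sigma(T)=\phi(\sigma(T))=\sigma(\phi(T))$. Applying $\phi$ termwise, $\phi(T)\in\mathcal{F}(K)$ has length $\mathsf D(\mathcal{S})+\kappa(\mathcal{S})-1\ge\mathsf D(K)+\kappa(\mathcal{S})-1$. Because $\exp(K)\mid\kappa(\mathcal{S})$ and $\kappa(\mathcal{S})\ge|K|$, Theorem B yields $\mathsf s_{\kappa(\mathcal{S})}(K)=\mathsf D(K)+\kappa(\mathcal{S})-1$, so $\phi(T)$ has a zero-sum subsequence $R^{*}$ of length exactly $\kappa(\mathcal{S})$. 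Let $R\mid T$ be the subsequence of $T$ whose image is $R^{*}$, and let $B$ be the complementary subsequence, so $|B|=\mathsf D(\mathcal{S})-1=|T|-\kappa(\mathcal{S})$. In the group $K$ we have $\sigma(\phi(B))=\sigma(\phi(T))-\sigma(R^{*})=\sigma(\phi(T))=\sigma(T)$. Finally $|B|=\mathsf D(\mathcal{S})-1\ge\eta$, so $\sigma(B)\in K$ and therefore $\sigma(B)=\phi(\sigma(B))=\sigma(\phi(B))=\sigma(T)$. Thus $\sigma(B)=\sigma(T)$ with $|T|-|B|=\kappa(\mathcal{S})$, as required.

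The step I expect to be the real obstacle is the inequality $\mathsf D(\mathcal{S})-1\ge\eta$ --- equivalently, guaranteeing that the trimmed sequence $B$, of length $\mathsf D(\mathcal{S})-1$, has its sum back inside the kernel $K$. This is where the finer structure of a subdirectly irreducible $\mathcal{S}$ (rather than that of an arbitrary elementary semigroup) should be used: the point is to produce a long irreducible sequence over $\mathcal{S}$, e.g.\ one of the form $a^{\eta}$ for an element $a\in N$ of maximal order modulo $K$, or built from a maximal non-vanishing product in $\mathcal{S}/K$; a short case distinction is needed according to whether $\mathcal{S}$ has an identity element (so that the empty sequence must be accounted for). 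If instead one only secures $|B|\ge\eta-1$, the deficit can be absorbed by choosing $R^{*}$ to avoid a short zero-sum block of $T$ modulo $K$ (equivalently, one leaving a term of $K$ inside $B$), which again forces $\sigma(B)\in K$. The remainder is routine bookkeeping, together with the cited group-theoretic Theorem B.
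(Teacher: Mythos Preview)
Your proposal rests on a structural misidentification. You describe an elementary semigroup as an ideal extension of its kernel $K$ (a group) by a nilsemigroup, with the Rees quotient $\mathcal{S}/K$ nilpotent. That is the structure of a finite \emph{archimedean} semigroup (Lemma~\ref{lemma structure of Archimedean}), not of an elementary one. In an elementary semigroup $\mathcal{S}=G\cup N$ (Definition~\ref{Definition elementary semigroup}) the roles are reversed: the nilsemigroup $N$ is the ideal, the zero element $\infty_{\mathcal{S}}$ lies in $N$, and the identity of $G$ is the identity $0_{\mathcal{S}}$. Consequently the kernel of $\mathcal{S}$ is the trivial group $\{\infty_{\mathcal{S}}\}$; your retraction $\phi(s)=0_K+s$ degenerates (it is the identity map if you take $0_K=0_G=0_{\mathcal{S}}$, or the constant map $s\mapsto\infty_{\mathcal{S}}$ if you take $0_K$ to be the identity of the actual kernel); and the claim that long sums land in the group part is false --- as soon as a single term of $T$ lies in $N$, the ideal property forces $\sigma(T)\in N$, not $\sigma(T)\in G$. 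So the core mechanism ``apply Theorem~B to $\phi(T)$ inside the kernel and pull back'' does not get started.

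What you have actually sketched is, almost verbatim, the paper's argument for Theorem~\ref{Theorem Archimedean E(G)} on archimedean semigroups, and the obstacle you correctly isolate --- the inequality $\mathsf D(\mathcal{S})-1\ge\eta=\mathcal{L}(\mathcal{S}/K)$ --- is precisely the one the paper cannot overcome in general there, which is why that theorem only yields $\mathsf E(\mathcal{S})\le\mathsf D(\mathcal{S})+\kappa(\mathcal{S})$ without the $-1$. For the elementary case the paper proceeds quite differently: split $T=T_1\cdot T_2$ with $T_1=T(G)$ and $T_2=T(N)$, reduce to $T_2\ne 1$ and $\sigma(T)\ne\infty_{\mathcal{S}}$, introduce the stabilizer $K=\{g\in G:g+\sigma(T_2)=\sigma(T_2)\}$, and prove via Lemmas~\ref{Lemma pure congruence}--\ref{Lemma anihilator} the key inequality $\mathsf D(\mathcal{S})\ge |T_2|+\mathsf D(G/K)$. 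This forces $|T_1|\ge\mathsf D(G/K)+\kappa(\mathcal{S})-1$, so Theorem~B produces $T_1'\mid T_1$ of length $\kappa(\mathcal{S})$ with $\sigma(T_1')\in K$, and then $\sigma(T\cdot T_1'^{-1})=\sigma(T)$ follows directly from the definition of the stabilizer.
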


To prove Theorem \ref{Theorem subdirectly irreducible holds}, several preliminaries will be necessary.

\begin{lemma}\cite{Grillet77} \ \label{Lemma structure subdirect decomposition}  Any subdirectly irreducible
f.c.s.  is either a nilsemigroup, or an abelian group, or an elementary  semigroup. In particular, any f.c.s.
is a subdirect product of a commutative nilsemigroup, an abelian group, and several elementary semigroups.
\end{lemma}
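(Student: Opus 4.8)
The statement has two parts: the trichotomy for a single subdirectly irreducible f.c.s., and the global subdirect decomposition. I treat the second part first, since it is a formal consequence of the first together with Theorem~D. By Theorem~D, $\mathcal{S}$ embeds as a subdirect product $\mathcal{S}\hookrightarrow\prod_{i=1}^{m}T_i$ of subdirectly irreducible semigroups $T_i$, each a homomorphic image of $\mathcal{S}$ and hence finite, with $m$ finite because $\mathcal{S}$ has only finitely many congruences. By the first assertion each $T_i$ is a nilsemigroup, an abelian group, or an elementary semigroup; reorder so that $T_1,\dots,T_r$ are nilsemigroups, $T_{r+1},\dots,T_s$ are abelian groups, and $T_{s+1},\dots,T_m$ are elementary. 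Let $N$ be the image of $\mathcal{S}$ under the projection to $\prod_{i\le r}T_i$ and $G$ its image in $\prod_{r<i\le s}T_i$. A subsemigroup of a finite direct product of nilsemigroups is again a nilsemigroup (it contains a power of each of its elements, hence the zero $\infty_{\mathcal{S}}$, and every element is nilpotent), so $N$ is a nilsemigroup; a subsemigroup of a finite abelian group is a subgroup, so $G$ is an abelian group. The three projections assemble into a map $\mathcal{S}\to N\times G\times\prod_{i>s}T_i$ whose composition with the inclusion into $\prod_i T_i$ is the original embedding, hence is injective, and which surjects onto $N$, onto $G$, and onto each $T_i$ with $i>s$. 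This exhibits $\mathcal{S}$ as a subdirect product of a nilsemigroup, an abelian group, and finitely many elementary semigroups.

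For the first assertion, recall that a f.c.s.\ is subdirectly irreducible exactly when its congruences have a least nonidentity element, the monolith $\mu$. The plan is to read off the structure of $\mathcal{S}$ from its greatest semilattice decomposition: write $\mathcal{N}$ for the least semilattice congruence, $Y=\mathcal{S}/\mathcal{N}$ for the greatest semilattice image, and $\{\mathcal{S}_\alpha\}_{\alpha\in Y}$ for the archimedean components. Since $\mathcal{S}$ is finite, $|Y|$ equals the number of idempotents of $\mathcal{S}$, and $\mathcal{S}$ is archimedean iff it has a unique idempotent. I first dispose of the archimedean case. If $\mathcal{S}$ is archimedean it has a unique idempotent $e$, its kernel (minimal ideal) $K=e+\mathcal{S}$ is an abelian group with identity $e$, and $\mathcal{S}$ is an ideal extension of $K$ by the nilsemigroup $\mathcal{S}/K$. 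This yields the trichotomy directly: if $K=\mathcal{S}$ then $\mathcal{S}$ is an abelian group; if $K=\{e\}$ then $e=\infty_{\mathcal{S}}$ is a zero and every element is nilpotent, so $\mathcal{S}$ is a nilsemigroup; and if $K$ is a nontrivial proper subgroup then $\mathcal{S}$ is, by definition, elementary.

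The remaining and genuinely harder case is $|Y|\ge 2$, i.e.\ $\mathcal{S}$ not archimedean. If $\mathcal{N}$ is the identity congruence then $\mathcal{S}=Y$ is itself a semilattice; as the only subdirectly irreducible semilattice is the two-element chain, $\mathcal{S}$ is the two-element semilattice, which is elementary (a trivial group together with a trivial nil ideal). If $\mathcal{N}$ is not the identity, then $\mu\subseteq\mathcal{N}$, and the plan is to exploit this containment to force $Y$ down to the two-element chain, to show that the top archimedean component is an abelian group, and that its complementary ideal is a nilsemigroup, so that $\mathcal{S}$ again has the elementary form (abelian group)\,$\cup$\,(nil ideal); the example $\mathbb{Z}_p\cup\{\infty_{\mathcal{S}}\}$ is the prototype. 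Carrying out this reduction, namely controlling the congruence lattice finely enough to bound $|Y|$ by $2$ and to identify the two components, is the main obstacle, and it is precisely the substance of Grillet's classification of subdirectly irreducible commutative semigroups \cite{Grillet77}. I expect the finiteness of $\mathcal{S}$ to simplify this step considerably, since it bounds all nilpotency indices and excludes the infinite groups ($\mathbb{Z}_{p^{\infty}}$) that otherwise occur among the subdirectly irreducible abelian factors.
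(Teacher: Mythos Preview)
The paper does not prove this lemma; it is quoted from \cite{Grillet77} without argument. Your derivation of the ``in particular'' clause from the trichotomy together with Theorem~D is clean and correct, and is a reasonable thing to spell out.

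However, your treatment of the archimedean case contains a genuine error. You write that if the kernel $K$ is a nontrivial proper subgroup then ``$\mathcal{S}$ is, by definition, elementary.'' This confuses two dual structures. In a finite archimedean semigroup the group $K=H_e=e+\mathcal{S}$ is the \emph{ideal} (it sits at the bottom), whereas in an elementary semigroup $\mathcal{S}=G\cup N$ it is the nilsemigroup $N$ that is the ideal and the identity of $G$ is required to be the identity of $\mathcal{S}$ (Definition~\ref{Definition elementary semigroup}). When $K$ is a nontrivial proper ideal, $e$ is not an identity for $\mathcal{S}$ (indeed $e+s\in K$ for $s\notin K$), and $\mathcal{S}$ has no zero element since $K$ is a nontrivial group; so $\mathcal{S}$ is not elementary. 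What actually happens is that an archimedean f.c.s.\ with $K$ nontrivial and proper is \emph{never} subdirectly irreducible: the Rees congruence $\rho_K$ and the kernel congruence of the retraction $\psi_K\colon a\mapsto e+a$ are both nontrivial, and their intersection is the equality (if $a,b\in K$ then $\psi_K(a)=a$, $\psi_K(b)=b$ forces $a=b$), so there is no monolith. Thus the archimedean case really does collapse to ``group or nilsemigroup,'' but not for the reason you gave.

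For the non-archimedean case you correctly identify the two-element semilattice as elementary, but then explicitly defer the remaining work to \cite{Grillet77}. Since that is exactly where the paper sends the reader, your write-up is at this point a sketch rather than a proof; the substantive content---forcing $|Y|=2$ with the top component a group and the bottom a nil ideal---still has to be supplied.
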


\begin{definition}\label{Definition elementary semigroup} A commutative semigroup $\mathcal{S}$ is elementary
in case it is the disjoint union $\mathcal{S}=G\cup N$ of a group $G$ and a nilsemigroup $N$, in which $N$ is
an ideal of $\mathcal{S}$, the zero element $\infty_N$ of $N$ is the zero element $\infty_{\mathcal{S}}$
of $\mathcal{S}$ and the identity element of $G$ is the identity element of $S$.
\end{definition}

\begin{lemma}\label{Lemma pure congruence} (\cite{Grillet monograph}, Proposition 3.2 of Chapter IV) \
On any commutative nilsemigroup $N$, the relation $\mathcal{P}_N$ on $N$ given by
$a \ \mathcal{P}_N \ b \Leftrightarrow \infty:a=\infty:b$ is a congruence on $N$ with $\{\infty_{N}\}$ being a
 $\mathcal{P}_N$ class, where $\infty:a=\{x\in N^{0}: x+a=\infty_{N}\}$.
\end{lemma}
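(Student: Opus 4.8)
The plan is to verify the three requirements in turn: that $\mathcal{P}_N$ is an equivalence relation, that it is compatible with addition (hence a congruence), and that the singleton $\{\infty_N\}$ is one of its classes. Since $\mathcal{P}_N$ is defined by the condition $\infty:a=\infty:b$, that is, by equality of the associated sets $\infty:a\subseteq N^{0}$, reflexivity, symmetry and transitivity are immediate and require no separate argument; all the content lies in the compatibility and in the identification of the $\{\infty_N\}$ class.

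For compatibility, I would fix $c\in N$ and seek a description of $\infty:(a+c)$ that depends on $a$ only through $\infty:a$. The key algebraic identity is that, for $x\in N^{0}$, associativity and commutativity give $x+(a+c)=(x+c)+a$, so $x+(a+c)=\infty_N$ holds precisely when $x+c\in \infty:a$. Here one should note that $x+c$ lies in $N$ for every $x\in N^{0}$ (whether $x$ is the adjoined identity $0_{N^{0}}$ or $x\in N$), hence in $N^{0}$, so the membership $x+c\in \infty:a$ is meaningful. This yields $\infty:(a+c)=\{x\in N^{0}: x+c\in \infty:a\}$, a set determined entirely by $\infty:a$. Consequently $\infty:a=\infty:b$ forces $\infty:(a+c)=\infty:(b+c)$, which is exactly the assertion that $a\ \mathcal{P}_N\ b$ implies $(a+c)\ \mathcal{P}_N\ (b+c)$; commutativity then covers addition on either side, completing the proof that $\mathcal{P}_N$ is a congruence.

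To show that $\{\infty_N\}$ is a $\mathcal{P}_N$ class, I would first compute $\infty:\infty_N$: since $\infty_N$ is the zero element, $x+\infty_N=\infty_N$ for every $x\in N^{0}$, so $\infty:\infty_N=N^{0}$, the largest possible such set. Conversely, suppose $\infty:a=N^{0}$ for some $a\in N$. Then the identity element $0_{N^{0}}$ of $N^{0}$ belongs to $\infty:a$, and $0_{N^{0}}+a=\infty_N$ gives $a=\infty_N$. Thus $\infty_N$ is the unique element whose set equals $N^{0}$, so the $\mathcal{P}_N$ class of $\infty_N$ is exactly $\{\infty_N\}$.

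The argument is essentially bookkeeping, and the only point demanding care — the main (mild) obstacle — is the consistent treatment of $N^{0}$ versus $N$: one must ensure that the sums $x+a$ and $x+c$ appearing above are evaluated correctly when $x$ is the adjoined identity $0_{N^{0}}$, and that every element produced still lies in $N^{0}$, so that the displayed set identities are genuine equalities of subsets of $N^{0}$.
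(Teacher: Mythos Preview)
Your argument is correct. The relation $\mathcal{P}_N$ is an equivalence because it is defined by equality of sets; your derivation of $\infty:(a+c)=\{x\in N^{0}: x+c\in\infty:a\}$ via associativity is exactly the right way to see compatibility, and your computation that $\infty:a=N^{0}$ forces $a=\infty_N$ (via the identity $0_{N^{0}}\in N^{0}$) cleanly isolates the class $\{\infty_N\}$. The care you take with $N^{0}$ versus $N$ is warranted and handled properly.

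As for comparison: the paper does not supply its own proof of this lemma. It is quoted as Proposition~3.2 of Chapter~IV in Grillet's monograph and used as a black box. Your self-contained verification is therefore more than the paper itself offers, and the route you take (reducing compatibility to a set-theoretic identity for $\infty:(a+c)$) is the standard one.
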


\begin{lemma} (\cite{Grillet monograph}, Proposition 5.1 of Chapter IV) \ \label{Lemma action on class}
In an elementary semigroup $\mathcal{S}=G\cup N$, the action of every $g\in G$ on $N$ permutes every
$\mathcal{P}_N$-class.
\end{lemma}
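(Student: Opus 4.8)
The plan is to prove the stronger fact that left translation by any $g \in G$ fixes each $\mathcal{P}_N$-class \emph{setwise}, and then to deduce from the group structure of $G$ that it restricts to a permutation of that class.

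First I would collect two easy observations about the translation map $\lambda_g \colon a \mapsto g+a$. Since $N$ is an ideal of $\mathcal{S} = G \cup N$ (Definition \ref{Definition elementary semigroup}) and $g \in G \subseteq \mathcal{S}$, the map $\lambda_g$ sends $N$ into $N$; and writing $-g$ for the inverse of $g$ in $G$ and $e$ for the common identity of $G$ and $\mathcal{S}$, we have $\lambda_{-g}\bigl(\lambda_g(a)\bigr) = (-g)+g+a = e+a = a$ for all $a \in N$, so $\lambda_g$ is a bijection of $N$ with inverse $\lambda_{-g}$. Moreover $\lambda_g(\infty_N) = g + \infty_{\mathcal{S}} = \infty_{\mathcal{S}} = \infty_N$, because $\infty_N = \infty_{\mathcal{S}}$ is a zero element of all of $\mathcal{S}$.

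The crux is the identity $\infty : (g+a) = \infty : a$ for every $a \in N$ and $g \in G$, where $\infty : a = \{x \in N^{0} : x+a = \infty_N\}$ as in Lemma \ref{Lemma pure congruence}. For $x \in N$ one has the equivalence $x + (g+a) = \infty_N \Leftrightarrow x + a = \infty_N$: indeed $x + a = \infty_N$ gives $x + (g+a) = g + (x+a) = g + \infty_N = \infty_N$, while $x + g + a = \infty_N$ gives $x + a = (-g) + (g+x+a) = (-g) + \infty_N = \infty_N$, both times using that $\infty_N$ absorbs every element of $\mathcal{S}$ (together with commutativity). For the formally adjoined element $x = 0 \in N^{0}\setminus N$ (which occurs only when $|N|>1$), $0 \in \infty : a$ means $a = \infty_N$, and this is equivalent to $g + a = \infty_N$ since $\lambda_g$ is a bijection of $N$ fixing $\infty_N$; hence $0 \in \infty : a \Leftrightarrow 0 \in \infty : (g+a)$. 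Thus $\infty : (g+a)$ and $\infty : a$ have the same elements, which is exactly $a \;\mathcal{P}_N\; (g+a)$.

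Consequently $\lambda_g$ maps every $\mathcal{P}_N$-class $C$ into $C$; the same holds for $\lambda_{-g}$, and since these two maps are mutually inverse on $N$, the restriction $\lambda_g|_C \colon C \to C$ is bijective, i.e.\ a permutation of $C$. As $g \in G$ and the class $C$ were arbitrary, this proves the lemma. I do not expect a genuine obstacle here; the only points requiring care are the bookkeeping around the adjoined identity $0$ of $N^{0}$ and the repeated appeal to the hypothesis — part of the very definition of an elementary semigroup — that $\infty_N$ is absorbing in the whole of $\mathcal{S}$, not merely in $N$, so that translating by an element of $G$ neither creates nor destroys the relation ``$x + a = \infty_N$''.
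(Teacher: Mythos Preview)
Your argument is correct. The paper itself does not give a proof of this lemma at all: it is quoted verbatim as Proposition~5.1 of Chapter~IV in Grillet's monograph and used as a black box in the proof of Theorem~\ref{Theorem subdirectly irreducible holds}. So there is no ``paper's own proof'' to compare against; you have supplied the omitted verification.

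Your route---showing directly that $\infty:(g+a)=\infty:a$ by translating back and forth with $g$ and $-g$, and separately checking the adjoined identity $0\in N^{0}$---is the natural one and matches what one finds in Grillet. The two ingredients you isolate are exactly the ones required: that $\infty_N=\infty_{\mathcal S}$ absorbs elements of $G$ as well as of $N$, and that the identity of $G$ is the identity of all of $\mathcal S$ so that $\lambda_{-g}\circ\lambda_g=\mathrm{id}_N$. Your handling of the case $x=0$ is also correct, including the observation that this case only arises when $|N|>1$ (a nontrivial nilsemigroup has no identity, since any idempotent in a nilsemigroup equals $\infty_N$). Nothing is missing.
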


\begin{lemma}\label{Lemma addition in nilsemigroup} \ Let $N$ be a finite commutative nilsemigroup,
and let $a,b$ be two elements in $N$. If $a+b=a$ then $a=\infty_N$.
 \end{lemma}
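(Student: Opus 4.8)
The plan is to exploit the absorbing property of $\infty_N$ together with the nilpotency of $b$. First I would iterate the hypothesis $a+b=a$: by associativity of the semigroup operation one has $a+(k+1)b=(a+kb)+b$ for every $k\geq 1$, so a one-line induction on $k$ (base case $k=1$ being the hypothesis, inductive step $a+(k+1)b=(a+kb)+b=a+b=a$) yields
$$a+kb=a\qquad\text{for every positive integer }k.$$

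Next, since $N$ is a nilsemigroup, the element $b$ is nilpotent, so there exists a positive integer $n$ with $nb=\infty_N$. Specializing the displayed identity to $k=n$ gives $a=a+nb=a+\infty_N$. As $\infty_N$ is the zero (absorbing) element of $N$, we have $a+\infty_N=\infty_N$, and therefore $a=\infty_N$, which is the assertion of the lemma.

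There is essentially no obstacle here; the only point requiring a little care is that the relation $a+kb=a$ must be derived using the semigroup operation alone, since $N$ need not contain an identity element and $b$ itself need not be a unit — this is why I phrase the induction directly in terms of iterated addition of $b$ rather than manipulating multiples abstractly. Note also that finiteness of $N$ is not actually needed for this particular statement, as the nilpotency exponent $n$ is furnished by the definition of a nilsemigroup.
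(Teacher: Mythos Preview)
Your proof is correct and follows exactly the same idea as the paper's proof, which simply writes the chain $a=a+b=a+2b=\cdots=a+nb=\infty_N$ for some $n\in\mathbb{N}$. Your version merely spells out the induction and the absorbing step more carefully, and your observation that finiteness is not actually used is valid.
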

\begin{proof} It is easy to see that $a=a+b=a+2b=\cdots=a+nb=\infty_N$ for some $n\in \mathbb{N}$, done.
 \end{proof}

\begin{lemma}\label{Lemma anihilator}
Let $N$ be a finite commutative nilsemigroup, and let $a,b$ be two elements of $N$ with $a<_{\mathcal{H}} b$.
Then $\infty:b \subsetneqq\infty:a$.
\end{lemma}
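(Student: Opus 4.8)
The plan is to establish the inclusion $\infty:b\subseteq\infty:a$ first, and then exhibit an explicit element of $\infty:a\setminus\infty:b$. Since $a<_{\mathcal{H}}b$, by definition $a\leqq_{\mathcal{H}}b$, so $a=b+t$ for some $t\in N^{0}$; moreover $a\neq b$ (as $\leqq_{\mathcal{H}}$ is reflexive while $b\not\leqq_{\mathcal{H}}a$, or directly from Lemma \ref{Lemma Green equality}), hence $t\neq 0$, i.e. $t\in N$. For the inclusion $\infty:b\subseteq\infty:a$, if $x\in\infty:b$ then $x+a=(x+b)+t=\infty_{N}+t=\infty_{N}$, so $x\in\infty:a$; this uses only that $\infty_{N}$ is the zero element of $N$.

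For the strict inclusion I would split into two cases. If $a=\infty_{N}$, then $\infty:a=N^{0}$, while $b\neq\infty_{N}$ (otherwise $a\leqq_{\mathcal{H}}b=\infty_{N}$ would force $a=\infty_{N}=b$, a contradiction), so $0\in N^{0}$ lies in $\infty:a$ but not in $\infty:b$ because $0+b=b\neq\infty_{N}$. If $a\neq\infty_{N}$, I would run a descending-chain argument driven by nilpotency of $t$: since $N$ is nil, $mt=\infty_{N}$ for some $m\geq 1$, hence $b+mt=\infty_{N}$, so the set $\{\,j\geq 1:\ b+jt\neq\infty_{N}\,\}$ is nonempty (it contains $1$, since $b+t=a\neq\infty_{N}$) and bounded above by $m-1$. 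Letting $k$ be its maximum, the element $x:=kt\in N\subseteq N^{0}$ satisfies $x+a=kt+b+t=b+(k+1)t=\infty_{N}$, so $x\in\infty:a$, whereas $x+b=b+kt\neq\infty_{N}$, so $x\notin\infty:b$. In either case $\infty:b\subsetneqq\infty:a$, as claimed.

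The computations here are routine; the only points needing care are the bookkeeping between $N$ and $N^{0}$ — making sure the annihilating element produced genuinely lies in $N^{0}$, and that the degenerate possibilities $a=\infty_{N}$, $b=\infty_{N}$, and $t=0$ are correctly handled — together with the observation that nilpotency of $t$ forces the chain $b,\ b+t,\ b+2t,\dots$ to reach $\infty_{N}$, which is exactly what makes the maximum $k$ well-defined. I do not expect any serious obstacle beyond this case analysis; Lemma \ref{Lemma addition in nilsemigroup} may also be invoked in place of the chain argument if one prefers to phrase the strictness via $x+b\neq\infty_{N}$ directly.
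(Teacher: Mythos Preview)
Your proof is correct, and it takes a somewhat different route from the paper's. Both arguments dispatch the inclusion $\infty:b\subseteq\infty:a$ and the degenerate case $a=\infty_N$ in the same way. For the main case $a\neq\infty_N$, the paper does not iterate $t$; instead it picks a \emph{minimal} element $c$ with $\infty_N<_{\mathcal{H}}c\leqq_{\mathcal{H}}a$, writes $a+x=c$ and $b+y=a$, and then uses Lemma~\ref{Lemma addition in nilsemigroup} together with the minimality of $c$ to force $c+y=\infty_N$, so that $x+y$ lies in $\infty:a\setminus\infty:b$. Your chain argument --- choosing the largest $k$ with $b+kt\neq\infty_N$, which exists because $t\in N$ is nilpotent --- is a cleaner and more self-contained alternative: it produces the separating element $kt$ directly from the single witness $t$ to $a<_{\mathcal{H}}b$, and it does not need the auxiliary minimal element $c$ or an appeal to Lemma~\ref{Lemma addition in nilsemigroup}. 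The paper's approach, on the other hand, is a bit more structural in flavour and would generalise more readily if one wanted to separate $\infty:a$ from $\infty:b$ using elements not of the special form $kt$.
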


\begin{proof} The conclusion $\infty:b \subseteq\infty:a$ is clear. Hence, we need only to show that
$\infty:b \neq \infty:a$.
If $a=\infty_N$, then $\infty:a=N^0 \neq N \supseteq \infty:b$, done. Hence, we assume
$\infty_N<_{\mathcal{H}} a$. It follows that there exists some {\bf minimal} element $c$ of $N$ such that
$$\infty_N<_{\mathcal{H}} c \leqq_{\mathcal{H}} a.$$ Then there exists some $x\in N^{0}$ such that
$$a+x=c.$$ Since $a<_{\mathcal{H}} b$, then there exists some $y\in N$ such that $$b+y=a.$$
Since $c\neq \infty_N$, by Lemma \ref{Lemma addition in
nilsemigroup} we have $c+y <_{\mathcal{H}} c$ and hence combined
with the minimality of $c$, we have $ c+y =\infty_N $. Therefore, it
follows that $a+(x+y)=(a+x)+y=c+y=\infty_N$ and
$b+(x+y)=(b+y)+x=a+x=c$, and thus, $x+y\in \infty:a$ and $x+y\notin
\infty:b$, and we are done. \end{proof}

 \noindent {\bf Proof of Theorem \ref{Theorem subdirectly irreducible holds}.} \
By Lemma \ref{Lemma structure subdirect decomposition}, Theorem B and  Corollary \ref{Corollary nilsemigroup},
it suffices to consider the case of $\mathcal{S}=G\cup N$ is an elementary semigroup.

 Take an arbitrary sequence $T\in \mathcal{F}(\mathcal{S})$ of length
$\mathsf D(\mathcal{S})+ \kappa(\mathcal{S})-1$. Let $$T_1=T(G)$$ be the
subsequence of $T$ consisting of all the terms from $G$, and let
$$T_2=T(N)$$ be the subsequence of $T$ consisting of all the terms
from $N$. Then $T_1$ and $T_2$ are disjoint and $T_1\cdot T_2=T$.

 If $T_2=1$, the empty sequence, then $T=T_1$ is a sequence of elements in the subsemigroup  $G$ (group).
Since $\mathsf D(\mathcal{S})\geq \mathsf D(G)$
 and $\kappa(\mathcal{S})\geq |\mathcal{S}|\geq |G|$ is a multiple of $\exp(G)=\exp(\mathcal{S})$,
it follows from Theorem B that there exists a subsequence
 $T'$ of $T$ with $$|T'|=\kappa(\mathcal{S})$$ and $$\sigma(T')=0_G=0_{\mathcal{S}}.$$
 Let $$T''=T\cdot T'^{-1}.$$ We see that $\sigma(T'')=\sigma(T'')+0_{\mathcal{S}}=\sigma(T'')+\sigma(T')
=\sigma(T)$, and we are done. Hence, we need only to consider the case that
 \begin{equation}\label{equation T2 neq empty}
 T_2\neq 1.
 \end{equation}

 Assume $\sigma(T)=\infty_{\mathcal{S}}$. Then there exists a subsequence $U$ of $T$ with
$$\sigma(U)=\sigma(T)$$ and $|U|\leq \mathsf D(\mathcal{S})-1$.  Take a subsequence $U'$ of $T$ with $$U\mid U'$$
and $$|U'|=\mathsf D(\mathcal{S})-1.$$
We check that $\sigma(U')=\sigma(U)+\sigma(U'U^{-1})
=\infty_{\mathcal{S}}+\sigma(U'U^{-1})=\infty_{\mathcal{S}}=\sigma(T)$, and we are done.
Hence, we need only to consider the case that
 \begin{equation}\label{equation sigma(T) infty}
 \sigma(T)\neq \infty_{\mathcal{S}}.
 \end{equation}

Define the subgroup $$K=\{g\in G: g+\sigma(T_2)=\sigma(T_2)\}$$ of $G$, i.e., $K$ is the stabilizer of
$\sigma(T_2)$ in $G$ when considering the action of $G$ on $N$. We claim that
 \begin{equation}\label{equation D(S)geq |T2|+D()}
 \mathsf D(\mathcal{S})\geq |T_2|+\mathsf D(G\diagup K).
 \end{equation}
 Take a sequence $W\in \mathcal{F}(G)\subset \mathcal{F}(\mathcal{S})$ such that
 $\varphi_{G\diagup K}(W)$ is zero-sum free in the quotient group $G\diagup K$ with $$|W|=\mathsf D(G\diagup K)-1,$$
where $\varphi_{G\diagup K}$ denotes the canonical epimorphism of $G$ onto $G\diagup K$.
 To prove \eqref{equation D(S)geq |T2|+D()}, it suffices to verify that $W\cdot T_2$ is irreducible
in $\mathcal{S}$. Suppose to the contrary that $W\cdot T_2$ contains a proper subsequence $V$ with
 \begin{equation}\label{equation sigma(V)=sigma()}
 \sigma(V)=\sigma(W\cdot T_2).
  \end{equation}
  Let
$$V=V_1\cdot V_2$$
   with
   $$V_1\mid W$$ and
   \begin{equation}\label{equation V2 mid T2}
   V_2\mid T_2.
    \end{equation}
    By \eqref{equation T2 neq empty}, we have $\sigma(W\cdot T_2)\in N$, which implies $$V_2\neq 1.$$
By Lemma \ref{Lemma action on class}, we have that
  $\sigma(V_2) \ \mathcal{P}_N \ \sigma(V)$ and $\sigma(T_2) \ \mathcal{P}_N \ \sigma(W\cdot T_2)$.
Combined with \eqref{equation sigma(V)=sigma()}, we have that \begin{equation}\label{equation sigma(V2)Psigma(T2)}
  \sigma(V_2) \ \mathcal{P}_N \ \sigma(T_2).
   \end{equation}
   By \eqref{equation sigma(T) infty}, we have
   \begin{equation}\label{equation sigma(T2)neqinftyN}
   \sigma(T_2)\neq \infty_N.
 \end{equation}
     By \eqref{equation V2 mid T2}, we have $$\sigma(T_2) \ \leqq_{\mathcal{H}} \ \sigma(V_2),$$ where
$\leqq_{\mathcal{H}}$ denotes the Green's preorder in the nilsemigroup $N$. Combined with \eqref{equation
sigma(V2)Psigma(T2)} and Lemma \ref{Lemma anihilator}, we derive that $\sigma(T_2) \ \mathcal{H} \ \sigma(V_2).$
It follows from
Lemma \ref{Lemma Green equality} that $$\sigma(T_2)=\sigma(V_2).$$
Combined with \eqref{equation V2 mid T2}, \eqref{equation sigma(T2)neqinftyN} and Lemma \ref{Lemma addition
in nilsemigroup}, we conclude that
$$V_2=T_2.$$
Recalling that $V$ is a proper subsequence of $W\cdot T_2$, we have $V_1\neq W$. Since
$\varphi_{G\diagup K}(W)$ is zero-sum free in the group $G\diagup K$, we derive that
$\sigma(V_1)-\sigma(W)\notin K$, and thus,
$\sigma(V)=\sigma(V_1)+\sigma(V_2)=\sigma(V_1)+\sigma(T_2)\neq \sigma(W)+\sigma(T_2)=\sigma(W\cdot T_2)$,
 a contradiction with \eqref{equation sigma(V)=sigma()}. This proves \eqref{equation D(S)geq |T2|+D()}.

By \eqref{equation D(S)geq |T2|+D()}, we have that
$$\begin{array}{llll}
|T_1|&=&|T|-|T_2|\\
&\geq& \mathsf D(\mathcal{S})+\kappa(\mathcal{S})-1-
 (\mathsf D(\mathcal{S})-\mathsf D(G\diagup K))\\
&=&\mathsf D(G\diagup K)+\kappa(\mathcal{S})-1.\\
\end{array}$$
Applying Theorem B, we derive that $T_1$ contains a subsequence $T_1'$
with $$|T_1'|=\kappa(\mathcal{S})$$ such that $\varphi_{G\diagup K}(\sigma(T_1'))=0_{G\diagup K}$, i.e.,
\begin{equation}\label{equation sigma(T1')inK}
\sigma(T_1')\in K.
\end{equation}
Let $T_2'=T\cdot T_1'^{-1}$. Observe $T_2\mid T_2'$. By \eqref{equation sigma(T1')inK}, we  check that
$$\begin{array}{llll}
\sigma(T)&=&\sigma(T_1')+\sigma(T_2')\\
&=& \sigma(T_1')+(\sigma(T_2)+\sigma(T_2'\cdot T_2^{-1}))\\
&=&(\sigma(T_1')+\sigma(T_2))+\sigma(T_2'\cdot T_2^{-1})\\
&=&\sigma(T_2)+\sigma(T_2'\cdot T_2^{-1})\\
&=&\sigma(T_2').\\
\end{array}$$
This completes the proof of the theorem. \qed

We remark that since the elementary semigroup $\mathcal{S}=G\cup N$ has an identity element
$0_\mathcal{S}=0_G$, the equality in the above theorem holds as noted in the introductory section,
i.e., $\mathsf E(\mathcal{S})=\mathsf D(\mathcal{S})+\kappa(S)-1$.

\section{On archimedean semigroups}

In this section, we shall deal with the
class of semigroups associated to semilattice decomposition of semigroups. The semilattice decompositions
were obtained for f.c.s by Schwarz \cite{Schwarz53} and Thierrin \cite{Thierrin54}, and then were extended
to all commutative semigroups by Tamura and Kimura \cite{TamuraKimura54}, in which they proved the following.

\noindent \textbf{Theorem E.} \ {\sl Every commutative semigroup is a semilattice of commutative archimedean
semigroups.}

\begin{definition} A commutative semigroup $\mathcal{S}$ is called archimedean provided that for any two
elements $a,b\in \mathcal{S}$, there exist $m,n>0$ and $x,y\in \mathcal{S}$ with $ma=b+x$ and $nb=a+y$.
\end{definition}

To be precise, for any commutative semigroup $\mathcal{S}$ there exists a semilattice $Y$ and a partition
$\mathcal{S}=\bigcup_{a\in Y} \mathcal{S}_a$ into subsemigroups $\mathcal{S}_a$ (one for every $a\in Y$)
with $\mathcal{S}_a+\mathcal{S}_b\subseteq \mathcal{S}_{a\wedge b}$ for all $a,b\in Y$, and moreover, each
component
$\mathcal{S}_a$ is archimedean. Hence, we shall consider Conjecture \ref{conjecture E(G)=D(G)+kappa(G)-1} on
archimedean semigroups in what follows. To proceed with it, several preliminaries will be necessary.

\begin{definition} We call a commutative semigroup $\mathcal{S}$ nilpotent if
$|\underbrace{\mathcal{S}+\cdots+\mathcal{S}}\limits_{t}|=1$ for some $t>0$. For any commutative nilpotent
semigroup $\mathcal{S}$, the least such positive integer $t$ is called the nilpotency index and is denoted by
$\mathcal{L}(\mathcal{S})$.
\end{definition}

Note that, when the commutative semigroup $\mathcal{S}$ is finite, $\mathcal{S}$ is nilpotent if and only if $\mathcal{S}$ is a nilsemigroup. With respect to finite semigroups, the famous Kleitman-Rothschild-Spencer conjecture (see \cite{KRSconjeture76}) states that, on a statistical
basis, almost all finite semigroups are nilpotent of index at most three, for which there is considerable
evidence, but gaps in the original proof have remained unfilled. For the commutative version of this conjecture,
there is also some evidence.

We need to give some important notions, namely the Rees congruence and the Rees quotient.
Let $\mathcal{I}$ be an ideal of a commutative semigroup $\mathcal{S}$.
The relation $\mathcal{J}$ defined by $$a\ \mathcal{J} \ b\Leftrightarrow a=b \mbox{ or } a,b\in \mathcal{I}$$
is a congruence on $\mathcal{S}$, the \textbf{Rees congruence} of the ideal $\mathcal{I}$.
Let $\mathcal{S}\diagup \mathcal{I}$ denote the quotient semigroup $\mathcal{S}\diagup \mathcal{J}$,
which is called the \textbf{Rees quotient semigroup} of $\mathcal{S}$ by $\mathcal{I}$. The Rees congruence
and the resulting Rees quotient semigroup introduced by Rees \cite{Rees40} in 1940 have been
 among the basic notions in Semigroup Theory.
In some sense, the Rees quotient semigroup is obtained by squeezing $\mathcal{I}$ to a zero element
(if $\mathcal{I}\neq \emptyset$) and leaving $S\setminus I$ untouched. Hence, it is not hard to obtain the
following lemma.

\begin{lemma}\label{Lemma Davenport Rees} \ For any ideal $\mathcal{I}$ of a f.c.s.
 $\mathcal{S}$, $$\mathsf D(\mathcal{S})\geq \mathsf D(\mathcal{S}\diagup \mathcal{I}).$$
\end{lemma}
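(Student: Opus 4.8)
The plan is to exploit the elementary structural description of the Rees quotient: $\mathcal{S}\diagup\mathcal{I}$ is, as a set, $(\mathcal{S}\setminus\mathcal{I})\cup\{\infty\}$ where $\infty$ is a freshly adjoined zero element representing the collapsed ideal $\mathcal{I}$, and the operation agrees with that of $\mathcal{S}$ whenever the product of two representatives stays outside $\mathcal{I}$, and equals $\infty$ otherwise. Write $\pi\colon\mathcal{S}\to\mathcal{S}\diagup\mathcal{I}$ for the canonical (Rees) epimorphism. The strategy is to take an arbitrary irreducible sequence $\bar T$ over $\mathcal{S}\diagup\mathcal{I}$ of length $\mathsf d(\mathcal{S}\diagup\mathcal{I})=\mathsf D(\mathcal{S}\diagup\mathcal{I})-1$, lift it to a sequence $T$ over $\mathcal{S}$ with $\pi(T)=\bar T$, and show that $T$ is irreducible over $\mathcal{S}$; this yields $\mathsf d(\mathcal{S})\ge\mathsf d(\mathcal{S}\diagup\mathcal{I})$, hence $\mathsf D(\mathcal{S})\ge\mathsf D(\mathcal{S}\diagup\mathcal{I})$ by Proposition~\ref{small-large}.

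First I would set up the lift: for each term $\bar a$ of $\bar T$ with $\bar a\ne\infty$ there is a unique $a\in\mathcal{S}\setminus\mathcal{I}$ with $\pi(a)=a=\bar a$, so those terms lift canonically; if $\bar T$ happens to contain the term $\infty$ (which can occur only when $\mathcal{I}\ne\emptyset$), choose any preimage in $\mathcal{I}$ for it. Call the resulting sequence $T$, so $|T|=|\bar T|$ and $\pi(T)=\bar T$. Next I would verify irreducibility of $T$ by contraposition: suppose $T$ has a proper subsequence $T'$ with $\sigma(T')=\sigma(T)$. Applying the homomorphism $\pi$ gives $\sigma(\pi(T'))=\pi(\sigma(T'))=\pi(\sigma(T))=\sigma(\pi(T))$ in $\mathcal{S}\diagup\mathcal{I}$, and $\pi(T')$ is a proper subsequence of $\pi(T)=\bar T$ of the same length as $T'$ (the lift is length-preserving term by term). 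That contradicts irreducibility of $\bar T$. Hence $T$ is irreducible of length $\mathsf D(\mathcal{S}\diagup\mathcal{I})-1$, which forces $\mathsf D(\mathcal{S})-1=\mathsf d(\mathcal{S})\ge\mathsf d(\mathcal{S}\diagup\mathcal{I})=\mathsf D(\mathcal{S}\diagup\mathcal{I})-1$, i.e.\ the claimed inequality.

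There is genuinely no hard obstacle here — the statement is, as the paper says, ``not hard'' — and the only point requiring a little care is the bookkeeping around the degenerate cases: when $\mathcal{I}=\emptyset$ the Rees quotient is just $\mathcal{S}$ itself and there is nothing to prove; when $\mathcal{S}$ has no identity and $\bar T$ or $T'$ is the empty sequence, one must remember the convention that the empty sequence is allowed only in the presence of an identity (or a zero acting as $\sigma$ of the empty sum), and track that $\sigma$ of the lift of $\infty$-terms really does land in $\mathcal{I}$. The essential content is simply that $\mathsf D$ is monotone under surjective semigroup homomorphisms, applied to the Rees epimorphism, and the proof is the one-line observation that a homomorphic image of a reducing subsequence is again a reducing subsequence of the same length.
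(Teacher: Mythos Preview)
The paper gives no proof of this lemma at all---it merely remarks that the result ``is not hard to obtain'' from the description of the Rees quotient---so there is nothing to compare against. Your argument is correct and is exactly the natural justification: lift a maximal irreducible sequence along the Rees epimorphism $\pi$ and observe that a reduction of the lift would push forward under $\pi$ to a reduction in the quotient. This is just the monotonicity of $\mathsf D$ under surjective semigroup homomorphisms, as you say, and your handling of the edge cases (the term $\infty$, the empty subsequence when an identity is present, $\mathcal{I}=\emptyset$) is fine.
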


 \begin{lemma}\label{lemma structure of Archimedean} (\cite{Grillet monograph}, Chapter III,
Proposition 3.1) \ A commutative semigroup $S$ which contains an idempotent $e$ (for instance
a f.c.s.) is archimedean if and only if it is an ideal extension of an abelian group $G$ by a
commutative nilsemigroup $N$; then $S$ has a kernel $K=H_e=e+S$ and $S\diagup K$ is a commutative nilsemigroup.
 \end{lemma}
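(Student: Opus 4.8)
The plan is to prove both directions by working directly with the principal ideal $e+S$ generated by the idempotent, exploiting repeatedly the single identity $ne=e$. (Note first that every f.c.s.\ contains an idempotent, since for each $a$ the cyclic subsemigroup $\langle a\rangle$ is finite and hence contains an idempotent; this justifies the parenthetical ``for instance a f.c.s.''.)

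For the forward direction, I would assume $S$ archimedean and set $K:=e+S$. One checks $K$ is an ideal (using $e+e=e$, any $s_1+(e+s_2)$ rewrites as $e+(s_1+e+s_2)\in e+S$) and that $e$ is an identity on $K$ because $e+(e+s)=e+s$. The crucial step is producing group inverses: applying the archimedean condition to the pair $(a,e)$ gives $m>0$ and $x\in S$ with $ma=e+x$, and since $ne=e$ for all $n$ this yields $e=a+w$ for a suitable $w\in S$ (take $w=(m-1)a+(ma)^{-1}$ when $m\ge 2$, and $w=a^{-1}$ when $m=1$, as then $a\in K$); applied to $k\in K$ this gives $k+(e+w)=e$ with $e+w\in K$, so $K$ is an abelian group with identity $e$. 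Then $K=H_e$: $k=e+s$ shows $k\leqq_{\mathcal{H}}e$ and $e=k+w$ shows $e\leqq_{\mathcal{H}}k$, so $K\subseteq H_e$; conversely $a\in H_e$ means $a=e+t$ with $t\in S^{0}$, hence $a\in e+S$ or $a=e$, so $a\in K$. That $K$ is the kernel is then automatic: for any ideal $I$ and $i\in I$, the element $(e+i)^{-1}+(e+i)$, computed in the group $K$, equals $e$ and lies in $I$, whence $K=e+S\subseteq I$. Finally $S\diagup K$ is a nilsemigroup, since the archimedean property applied to $(a,e)$ gives $ma=e+x\in K$ for some $m$, i.e.\ the image of every element in the Rees quotient is nilpotent with the class $K$ as absorbing zero.

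For the converse, I would assume $S$ is an ideal extension of an abelian group $G$, realized as an ideal $K$ of $S$ with identity $e:=0_K$ (an idempotent of $S$), by the commutative nilsemigroup $N\cong S\diagup K$. First record $K=e+S$: the inclusion $e+S\subseteq K$ holds because $K$ is an ideal, and $K=e+K\subseteq e+S$. The key observation is that every $a\in S$ divides $e$: nilpotency of the image of $a$ in $N$ gives $m$ with $ma\in K$, and then $a+\bigl((m-1)a+(ma)^{-1}\bigr)=e$ (with the degenerate case $m=1$, where $a\in K$ already, handled by $a+a^{-1}=e$). Given $a,b\in S$, apply this to $b$ to get $w$ with $b+w=e$, and pick $m$ with $ma\in K$; then $b+(w+ma)=e+ma=ma$, so $ma=b+x$ with $x=w+ma\in S$. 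Swapping the roles of $a$ and $b$ yields the symmetric relation, so $S$ is archimedean, and the remaining assertions $K=H_e=e+S$ and ``$S\diagup K$ nil'' are exactly as in the forward direction.

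The main obstacle I anticipate is organizational rather than conceptual: the whole argument hinges on the observation that the idempotent absorbs the multiplicities produced by the archimedean (resp.\ nilpotency) hypothesis, and once that is used the rest is routine. Care is mainly needed to keep the Rees-quotient formalism straight and to treat without hand-waving the degenerate cases ($a$ or $b$ already in $K$, or $S$ lacking an identity so that $0\in S^{0}$) in the computations with $\leqq_{\mathcal{H}}$.
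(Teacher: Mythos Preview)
The paper does not supply its own proof of this lemma; it is simply quoted from Grillet's monograph (Chapter~III, Proposition~3.1). So there is nothing to compare against, and your proposal can only be assessed on its own merits.

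Your converse direction is correct, and the structural verifications ($K=e+S=H_e$, $K$ is the kernel, $S\diagup K$ is a nilsemigroup) are fine. In the forward direction, however, the parenthetical formula for $w$ is circular: you propose $w=(m-1)a+(ma)^{-1}$ (and $w=a^{-1}$ when $m=1$), but at that stage you have not yet established that $K=e+S$ is a group, so the symbol $(ma)^{-1}$ has no meaning. The phrase just before the parenthetical, ``since $ne=e$ for all $n$ this yields $e=a+w$'', actually points to the right (and much simpler) argument, which you then fail to carry out: apply the \emph{other} half of the archimedean condition to the pair $(e,a)$ to obtain $n>0$ and $y\in S$ with $ne=a+y$; since $ne=e$, this is precisely $e=a+y$, and you may take $w=y$. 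With this correction the remainder of your forward argument (inverses in $K$, $K=H_e$, minimality of $K$ among ideals, nilpotency of $S\diagup K$) goes through as written. The degenerate cases you flag at the end are indeed routine once this is fixed.
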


  \begin{lemma}\label{Lemma leq D(N)leq} \ For any finite commutative nilsemigroup $N$,
  $$\mathcal{L}(N)\leq \mathsf D(N) \leq \mathcal{L}(N)+1.$$
 \end{lemma}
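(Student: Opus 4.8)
The plan is to prove the two inequalities independently; both follow quickly from the definition of the nilpotency index together with the observation that in a nilsemigroup the sum of any $\mathcal L(N)$ (or more) terms equals the zero element $\infty_N$.

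For the upper bound $\mathsf D(N)\le \mathcal L(N)+1$, I would take an arbitrary $A\in\mathcal F(N)$ of length $\mathcal L(N)+1$ and show that it is reducible. Since $|A|\ge \mathcal L(N)$, the sum $\sigma(A)$ lies in the $\mathcal L(N)$-fold sumset $\underbrace{N+\cdots+N}_{\mathcal L(N)}=\{\infty_N\}$, so $\sigma(A)=\infty_N$; deleting any single term yields a proper subsequence $B$ with $|B|=\mathcal L(N)\ge 1$ and, by the same reasoning, $\sigma(B)=\infty_N=\sigma(A)$. Hence every sequence of length $\mathcal L(N)+1$ is reducible, which gives $\mathsf D(N)\le \mathcal L(N)+1$.

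For the lower bound $\mathsf D(N)\ge \mathcal L(N)$, it suffices, by the definition of the (large) Davenport constant, to exhibit an irreducible sequence of length $\mathcal L(N)-1$. If $\mathcal L(N)=1$ there is nothing to prove, since $\mathsf D(N)\ge 1$ always. Assume $\mathcal L(N)\ge 2$. By minimality of $\mathcal L(N)$ the $(\mathcal L(N)-1)$-fold sumset is not $\{\infty_N\}$, so I can pick $a_1,\dots,a_{\mathcal L(N)-1}\in N$ with $x:=a_1+\cdots+a_{\mathcal L(N)-1}\neq\infty_N$, and set $A=a_1\cdot\ldots\cdot a_{\mathcal L(N)-1}$. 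To see that $A$ is irreducible, suppose $A=B\cdot C$ with $C\neq 1$ and $\sigma(B)=\sigma(A)=x$; then $x=\sigma(B)+\sigma(C)=x+\sigma(C)$, and iterating gives $x=x+k\,\sigma(C)$ for every $k\ge 1$. As $|C|\ge 1$, choosing $k$ with $k|C|\ge\mathcal L(N)$ makes $k\,\sigma(C)$ a sum of at least $\mathcal L(N)$ elements of $N$, hence equal to $\infty_N$; therefore $x=x+\infty_N=\infty_N$, contradicting $x\neq\infty_N$. Thus $A$ is irreducible of length $\mathcal L(N)-1$, so $\mathsf D(N)\ge \mathcal L(N)$.

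There is no real obstacle here; the only points that need a little care are the degenerate case $|N|=1$ (where $\infty_N$ is at once the identity and the zero element, so empty sequences are allowed), which is absorbed into the trivial bound $\mathsf D(N)\ge 1$, and, in the irreducibility argument, making sure that $|C|\ge 1$ so that $k|C|\to\infty$ and the collapse to $\infty_N$ actually takes place.
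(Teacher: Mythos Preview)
Your proposal is correct and follows essentially the same approach as the paper. The only difference is cosmetic: for the lower bound the paper invokes Lemma~\ref{Lemma addition in nilsemigroup} (if $a+b=a$ in a finite commutative nilsemigroup then $a=\infty_N$) to conclude that the chosen sequence is irreducible, whereas you reprove that fact inline via the iteration $x=x+k\,\sigma(C)$; the upper bound arguments are identical.
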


 \begin{proof} By the definition of $\mathcal{L}(N)$, there exists a sequence
$T\in \mathcal{F}(N)$ with $|T|=\mathcal{L}(N)-1$
and $\sigma(T)\neq \infty_N$. By Lemma \ref{Lemma addition in nilsemigroup}, we have that $T$ is
irreducible, which implies $\mathsf D(N)\geq |T|+1=\mathcal{L}(N)$. On the other hand, since any sequence
in $\mathcal{F}(N)$ of length  $\mathcal{L}(N)$ has a sum $\infty_N$, we have $\mathsf D(N) \leq \mathcal{L}(N)+1$,
and we are through.
\end{proof}

Now we are in a position to put out our result on archimedean semigroup as follows.

\begin{theorem} \label{Theorem Archimedean E(G)} For any  finite archimedean semigroup
$\mathcal{S}$, $\mathsf E(\mathcal{S})\leq \mathsf D(\mathcal{S})+\kappa(\mathcal{S})$. Moreover, if the
nilsemigroup $\mathcal{S}\diagup K$ has a nilpotency index at most three, then
$\mathsf E(\mathcal{S})\leq \mathsf D(\mathcal{S})+\kappa(\mathcal{S})-1$, where $K$ denotes the kernel of $\mathcal{S}$.
\end{theorem}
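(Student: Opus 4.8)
The plan is to follow the same strategy as in the proof of Theorem \ref{Theorem subdirectly irreducible holds}, but now with the structure provided by Lemma \ref{lemma structure of Archimedean}: a finite archimedean semigroup $\mathcal{S}$ is an ideal extension of its kernel $K=H_e=e+\mathcal{S}$, an abelian group, by the commutative nilsemigroup $N:=\mathcal{S}\diagup K$. Write $\exp(\mathcal{S})=\exp(K)$ (since all nontrivial subgroups live in $K$), and note that $\kappa(\mathcal{S})$ is a multiple of $\exp(K)$ with $\kappa(\mathcal{S})\geq|\mathcal{S}|\geq|K|$. Take an arbitrary sequence $T\in\mathcal{F}(\mathcal{S})$ of length $\mathsf D(\mathcal{S})+\kappa(\mathcal{S})$ (or $\mathsf D(\mathcal{S})+\kappa(\mathcal{S})-1$ in the nilpotency-index-$\leq 3$ case), and let $\pi:\mathcal{S}\to N$ be the Rees quotient map. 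Split $T=T_1\cdot T_2$ where $T_1=T(K)$ consists of the terms lying in the kernel and $T_2$ consists of the remaining terms (those mapping to nonzero elements of $N$).

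First I would dispose of the easy cases exactly as before: if $\sigma(T)\in K$ — equivalently $\pi(\sigma(T))=\infty_N$, which happens in particular when $T_2$ is long — then $\sigma(T)$ is already a group element absorbing everything, and one can pad any short subsequence realizing $\sigma(T)$ up to length $\mathsf D(\mathcal{S})-1$, winning even with one term to spare. So assume $\sigma(T)\notin K$, hence $\sigma(\pi(T))=\sigma(\pi(T_2))\neq\infty_N$; by Lemma \ref{Lemma addition in nilsemigroup} this forces $\pi(T_2)$, and hence $T_2$ itself, to be $\leq_{\mathcal H}$-irreducible in $N$ in the strong sense that no proper sub-multiset of $\pi(T_2)$ has the same sum, which by Lemma \ref{Lemma Davenport Rees} combined with Lemma \ref{Lemma leq D(N)leq} gives the bound $|T_2|\leq \mathcal{L}(N)-1\leq \mathsf D(N)-1\leq \mathsf D(\mathcal{S})-1$ — this is where the nilpotency index enters quantitatively. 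Then $|T_1|=|T|-|T_2|\geq \mathsf D(\mathcal{S})+\kappa(\mathcal{S})-(\mathsf D(\mathcal{S})-1) = \kappa(\mathcal{S})+1$ in the general case, which is (more than) enough to apply Theorem B inside the group $K$ — using $\mathsf D(K)\leq\mathsf D(\mathcal{S})$ — to extract $T_1'\mid T_1$ with $|T_1'|=\kappa(\mathcal{S})$ and $\sigma(T_1')=0_K$; setting $T'=T\cdot T_1'^{-1}$, the kernel-absorption $\sigma(T')+0_K=\sigma(T')$ gives $\sigma(T')=\sigma(T)$ and $|T|-|T'|=\kappa(\mathcal{S})$, as required. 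The refinement to $\mathsf D(\mathcal{S})+\kappa(\mathcal{S})-1$ when $\mathcal{L}(N)\leq 3$ comes from the sharper count $|T_2|\leq \mathcal{L}(N)-1\leq 2$, which when combined with Lemma \ref{Lemma leq D(N)leq} and a comparison of $\mathsf D(\mathcal{S})$ with $|T_2|+\mathsf D(K/K')$ for the appropriate stabilizer subgroup $K'$ of $\sigma(T_2)$ (mimicking inequality \eqref{equation D(S)geq |T2|+D()} from the elementary case) recovers the missing $-1$; the point is that small $|T_2|$ leaves no slack to waste.

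The main obstacle I anticipate is the same bookkeeping subtlety that made the elementary-semigroup proof delicate: one needs $|T_1|\geq \mathsf D(K)+\kappa(\mathcal{S})-1$ exactly, and the crude bound $|T_1|\geq \kappa(\mathcal{S})+1$ may not suffice for the $-1$ improvement unless one proves the analogue of \eqref{equation D(S)geq |T2|+D()}, namely $\mathsf D(\mathcal{S})\geq |T_2|+\mathsf D(K/K')$ where $K'=\{g\in K: g+\sigma(T_2)=\sigma(T_2)\}$. This requires showing that a zero-sum-free sequence $W$ over $K$ modulo $K'$, adjoined to $T_2$, is irreducible in $\mathcal{S}$ — the proof should run parallel to the elementary case, using Lemma \ref{Lemma anihilator} and Lemma \ref{Lemma Green equality} applied to $N=\mathcal{S}\diagup K$ to conclude that any reduction must fix the $N$-part $T_2$ entirely, and then that the group part cannot reduce modulo $K'$. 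The reason the general bound only yields $\mathsf D(\mathcal{S})+\kappa(\mathcal{S})$ rather than $\mathsf D(\mathcal{S})+\kappa(\mathcal{S})-1$ is precisely that without control on $\mathcal{L}(N)$ we can only guarantee $|T_2|\leq\mathcal{L}(N)\leq\mathsf D(N)\leq\mathsf D(\mathcal{S})$ through the weaker half of Lemma \ref{Lemma leq D(N)leq}, losing one unit; the hypothesis $\mathcal{L}(N)\leq 3$ is what closes that gap.
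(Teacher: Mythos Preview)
Your dichotomy on whether $\sigma(T)\in K$ is misguided: for a sequence $T$ of length $\mathsf D(\mathcal{S})+\kappa(\mathcal{S})-\epsilon$, one \emph{always} has $\sigma(T)\in K$, since $|T|\geq \mathsf D(\mathcal{S})\geq \mathsf D(N)\geq \mathcal{L}(N)$ forces $\pi(\sigma(T))=\infty_N$. So your ``main'' case $\sigma(T)\notin K$ is vacuous, and the entire argument must take place in what you call the easy case. But your handling of that case is wrong: elements of the kernel $K$ are \emph{not} absorbing (only $\infty_{\mathcal{S}}$ is, and an archimedean semigroup need not have one). If $\sigma(T'')=\sigma(T)\in K$ with $|T''|<\mathsf D(\mathcal{S})-1$, adjoining an extra term $a$ gives $\sigma(T''\cdot a)=\sigma(T)+a$, which lies in $K$ but is generally $\neq\sigma(T)$; the padding trick from the $\sigma(T)=\infty_{\mathcal{S}}$ case of the elementary proof does not transfer. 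Likewise, your proposed stabiliser $K'=\{g\in K:g+\sigma(T_2)=\sigma(T_2)\}$ is problematic: since $K$ is an ideal, $g+\sigma(T_2)\in K$ for every $g\in K$, so if $\sigma(T_2)\notin K$ the set $K'$ is empty, not a subgroup.

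The idea you are missing is the retraction $\psi_K:\mathcal{S}\to K$, $a\mapsto e+a$. Rather than splitting $T$ into kernel and non-kernel parts, the paper pushes \emph{all} of $T$ into the group $K$ via $\psi_K$ and applies Theorem~B there to extract $T'$ of length $\kappa(\mathcal{S})$ with $\psi_K(\sigma(T'))=e$. The crucial remaining step is to show $\sigma(TT'^{-1})=\sigma(T)$: this uses that $|TT'^{-1}|=\mathsf D(\mathcal{S})-\epsilon$, so if $\mathsf D(\mathcal{S})-\epsilon\geq\mathcal{L}(N)$ then $\sigma(TT'^{-1})\in K$ as well, and then $\sigma(TT'^{-1})=\sigma(TT'^{-1})+e=\sigma(TT'^{-1})+e+\sigma(T')=\sigma(T)+e=\sigma(T)$. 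For $\epsilon=0$ the needed inequality is immediate from $\mathsf D(\mathcal{S})\geq\mathsf D(N)\geq\mathcal{L}(N)$. For $\epsilon=1$ with $\mathcal{L}(N)\leq 3$, either $\mathsf D(\mathcal{S})-1\geq\mathcal{L}(N)$ and the same argument works, or one is forced into the single exceptional configuration $\mathsf D(\mathcal{S})=\mathcal{L}(N)=3$, $K\cong C_2$, which is handled by a short ad hoc argument exploiting $\exp(\mathcal{S})=2$.
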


\begin{proof}  Let $e$ be the unique idempotent of $\mathcal{S}$.
By Lemma \ref{lemma structure of Archimedean}, we have that the kernel $K=H_e=e+S$ and the Rees quotient semigroup
\begin{equation}\label{equation N=S/K}
N=\mathcal{S}\diagup K
\end{equation} is a nilsemigroup. By Theorem B and Corollary \ref{Corollary nilsemigroup}, we need only to
consider the case that both $K$ and $N$ are nontrivial. We claim that
\begin{equation}\label{equation D(G)>=D(N),D(K)+1}
\mathsf D(\mathcal{S})\geq \max(\mathsf D(N), \ \mathsf D(K)+1).
\end{equation}
 $\mathsf D(\mathcal{S})\geq \mathsf D(N)$ follows from \eqref{equation N=S/K} and Lemma \ref{Lemma Davenport Rees}. Now take
a minimal zero-sum sequence $U$ of elements in the group $K$ (the kernel of $\mathcal{S}$) with length
$|U|=\mathsf D(K)$.
Since $N$ is nontrivial, the semigroup $\mathcal{S}$ has no identity element, which implies that $U$ is
irreducible in $\mathcal{S}$, and thus, $\mathsf D(\mathcal{S})\geq |U|+1=\mathsf D(K)+1$. This proves
\eqref{equation D(G)>=D(N),D(K)+1}.

Now we take a sequence $T\in \mathcal{F}(\mathcal{S})$ with  $|T|=\mathsf D(\mathcal{S})+\kappa(\mathcal{S})-\epsilon$,
where $\epsilon=0$ or $\epsilon=1$ according to the conclusions to prove in what follows. Since
$|T|=\mathsf D(\mathcal{S})+\kappa(\mathcal{S})-\epsilon\geq \mathsf D(N)+\kappa(G)-\epsilon\geq  \mathsf D(N)$, it follows
from Lemma \ref{Lemma leq D(N)leq} that $\sigma(T)$ belongs to the kernel of $\mathcal{S}$, i.e.,
\begin{equation}\label{equation sigma(T) in K}
\sigma(T)\in K.
\end{equation}
Let $\psi_K: \mathcal{S}\rightarrow K$ be the canonical retraction of $\mathcal{S}$ onto $K$, i.e.,
$$\psi_K(a)=e+a$$ for every $a\in \mathcal{S}$.
Notice that $\psi_K(T)$ is a sequence of elements in the kernel $K$ with length
$$|\psi_K(T)|=|T|=\mathsf D(\mathcal{S})+\kappa(\mathcal{S})-\epsilon\geq \mathsf D(K)+\kappa(\mathcal{S}).$$
Since $|\kappa(\mathcal{S})|\geq |K|$ and
$\exp(K)=\exp(\mathcal{S})\mid \kappa(\mathcal{S})$, it follows from Theorem B that
there exists a subsequence $T'$ of $T$ with $$|T'|=\kappa(\mathcal{S})$$ such that
 $\psi_K(T')$ is a zero-sum sequence in the kernel $K$, i.e.,
 \begin{equation}\label{equation psik()=e}
 \psi_K(\sigma(T'))=\sigma(\psi_K(T'))=e.
 \end{equation}
Now we assert the following.

\noindent \textbf{Claim.} \  If $\mathsf D(\mathcal{S})-\epsilon\geq \mathcal{L}(N)$ then $\mathsf E(\mathcal{S})
\leq \mathsf D(\mathcal{S})+\kappa(\mathcal{S})-\epsilon$.

 Since $|TT'^{-1}|=\mathsf D(\mathcal{S})-\epsilon\geq \mathcal{L}(N)$, it follows that $$\sigma(TT'^{-1})\in K.$$
Combined with \eqref{equation sigma(T) in K} and \eqref{equation psik()=e}, we have that
$$\begin{array}{llll}
\sigma(TT'^{-1})&=&\sigma(TT'^{-1})+e\\
&=& \sigma(TT'^{-1})+\psi_K(\sigma(T'))\\
&=&\sigma(TT'^{-1})+(e+\sigma(T'))\\
&=&(\sigma(TT'^{-1})+\sigma(T'))+e\\
&=&\sigma(T)+e\\
&=&\sigma(T).\\
\end{array}$$
Recall $|T'|=\kappa(\mathcal{S})$. This proves the claim.

By \eqref{equation D(G)>=D(N),D(K)+1}, Lemma \ref{Lemma leq D(N)leq}, and applying the above claim with
$\epsilon=0$, we conclude that $$\mathsf E(\mathcal{S})\leq \mathsf D(\mathcal{S})+\kappa(\mathcal{S}).$$
It remains to show $\mathsf E(\mathcal{S})\leq \mathsf D(\mathcal{S})+\kappa(\mathcal{S})-1$ when
$\mathcal{S}\diagup K$ has a nilpotency index at most three. Take $\epsilon=1$. By
 the above claim, we may assume without loss of generality that
$\mathsf D(\mathcal{S})\leq \mathcal{L}(N).$ Since $K$ is nontrivial, we have $\mathsf D(K)\geq 2.$ Combined with
$\mathcal{L}(N)\leq 3$ and \eqref{equation D(G)>=D(N),D(K)+1} and Lemma \ref{Lemma leq D(N)leq},
we conclude that $$\mathsf D(\mathcal{S})=\mathcal{L}(N)=3,$$
and $\mathsf D(K)=2$ which implies $$K=C_2,$$ the group of two elements.
Take a subsequence $T''$ of $T$ with length $$|T''|\leq \mathsf D(\mathcal{S})-1$$ and $$\sigma(T'')=\sigma(T).$$
If $|T''|=\mathsf D(\mathcal{S})-1$, we are done. Now assume $|T''|<\mathsf D(\mathcal{S})-1$, equivalently,
$$|TT''^{-1}|>\kappa(\mathcal{S}).$$ By \eqref{equation sigma(T) in K}, we have
\begin{equation}\label{equation sigma(T'')inK}
\sigma(T'')\in K,
\end{equation}
 and thus, $\sigma(\psi_K(TT''^{-1}))=\psi_K(\sigma(TT''^{-1}))=e+\sigma(TT''^{-1})=e$.
Since $\exp(\mathcal{S})=\exp(K)=2$, we can find a subsequence $U$ of $TT''^{-1}$ of length
exactly $\kappa(\mathcal{S})$ such that $$\sigma(\psi_K(U))=e.$$ Since $T''\mid TU^{-1}$, it follows from
\eqref{equation sigma(T'')inK} that $\sigma(TU^{-1})\in K$, and thus, $$\begin{array}{llll}
\sigma(TU^{-1})&=&\sigma(TU^{-1})+e\\
&=&\sigma(TU^{-1})+\sigma(\psi_K(U))\\
&=&\sigma(TU^{-1})+\psi_K(\sigma(U))\\
&=&\sigma(TU^{-1})+e+\sigma(U)\\
&=&\sigma(T)+e\\
&=&\sigma(T).\\
\end{array}$$ This completes the proof of the theorem.
\end{proof}

\bigskip

\noindent {\bf Acknowledgements}

\noindent  This work is supported by NSFC (11301381, 11271207,
11001035), Science and Technology Development Fund of Tianjin Higher
Institutions (20121003). This work was initiated during  the  first
author visited the Center for Combinatorics of Nankai University in
2010, he would like to thank the host's hospitality. In addition, the authors would like to thank the referee for many valuable comments and suggestions.

\end{document}